\newcommand*\bigcdot{\mathpalette\bigcdot@{.5}}
\newcommand*\bigcdot@[2]{\mathbin{\vcenter{\hbox{\scalebox{#2}{$\m@th#1\bullet$}}}}}
\newcommand{\Hom}{\ensuremath{\operatorname{Hom}}}
\newcommand{\Ind}{\ensuremath{\operatorname{ind}}}
\newcommand{\Dist}[1]{\operatorname{Dist}(#1)}
\newcommand{\U}[1]{\mathcal{U}_{#1}}
\newcommand{\Usmall}[1]{u_{#1}}
\newcommand{\St}{\operatorname{St}}
\newcommand{\leqnomode}{\tagsleft@true}
\newcommand{\reqnomode}{\tagsleft@false}
\newtheorem{theorem}{Theorem}[subsection]
\let\c@fact\c@theorem\makeatother
\let\c@note\c@theorem\makeatother
\newtheorem{lemma}{Lemma}[subsection]
\let\c@lemma\c@theorem\makeatother
\let\c@lemma\c@theorem\makeatother
\let\c@alg\c@theorem\makeatother
\newtheorem{prop}{Proposition}[subsection]
\let\c@prop\c@theorem\makeatother
\let\c@conj\c@theorem\makeatother
\newtheorem{cor}{Corollary}[subsection]
\let\c@cor\c@theorem\makeatother
\let\c@defn\c@theorem\makeatother
\theoremstyle{definition}
\newtheorem{algorithm}{Algorithm}[subsection]
\newtheorem{remark}{Remark}[subsection]
\let\c@remark\c@theorem\makeatother
\let\c@example\c@theorem\makeatother
\numberwithin{equation}{subsection}
\crefname{theorem}{Theorem}{Theorems}
\crefname{fact}{Fact}{Facts}
\crefname{note}{Note}{Notes}
\crefname{lemma}{Lemma}{Lemmas}
\crefname{alg}{Algorithm}{Algorithms}
\crefname{remark}{Remark}{Remarks}
\crefname{example}{Example}{Examples}
\crefname{prop}{Proposition}{Propositions}
\crefname{conj}{Conjecture}{Conjectures}
\crefname{cor}{Corollary}{Corollaries}
\crefname{defn}{Definition}{Definitions}
\crefname{equation}{\!\!}{\!\!} %Remove spacing around phantom equation name
\newcounter{listequation}
\begin{document}

\title{Steinberg quotients, Weyl Characters, and Kazhdan-Lusztig Polynomials}

\author{Paul Sobaje}
\address{Department of Mathematics \\
          Georgia Southern University}
\email{psobaje@georgiasouthern.edu}
\date{\today}
\subjclass[2010]{Primary 20G05}

\begin{abstract}
Let $G$ be a reductive group over a field of prime characteristic.  An indecomposable tilting module for $G$ whose highest weight lies above the Steinberg weight has a character that is divisible by the Steinberg character.  The resulting ``Steinberg quotient" carries important information about $G$-modules, and in previous work we studied patterns in the weight multiplicities of these characters.  In this paper we broaden our scope to include quantum Steinberg quotients, and show how the multiplicities in these characters relate to algebraic Steinberg quotients, Weyl characters, and evaluations of Kazhdan-Lusztig polynomials.  We give an explicit algorithm for computing minimal characters that possess a key attribute of Steinberg quotients.   We provide computations which show that these minimal characters are not always equal to quantum Steinberg quotients, but are close in several nontrivial cases.
\end{abstract}

\maketitle

%\tableofcontents

\section{Introduction}

\subsection{Overview}
This is a sequel to \cite{S1} in which we investigated characters of certain tilting modules.  In short, if $G$ is a reductive group in prime characteristic $p>0$, then an indecomposable tilting module for $G$ of the form $T((p-1)\rho +\lambda)$, where $\lambda$ is a $p$-restricted dominant weight, has a character that is divisible by the Steinberg character $\chi((p-1)\rho)$.  The resulting ``Steinberg quotient" $t(\lambda)$ is a nonnegative linear combination of $W$-orbit sums.  Thanks to the linkage principle, we can list which orbit sums might appear in $t(\lambda)$.  In previous work we proved that all such orbit sums do appear, and that their coefficients are weakly increasing in size as one moves down from the highest weight under the $\uparrow$ partial ordering.

In this paper we enlarge our investigation to consider $t(\lambda)$ for all dominant weights $\lambda$, as well as quantum Steinberg quotients $t_{\zeta}(\lambda)$, defined in the analogous way for tilting modules of a quantum group at a $p$-th root of unity.  In addition to the above pattern holding more generally, the wider scope makes clearer the connections between Steinberg quotients and more commonly studied quantities such as Weyl characters and Kazhdan-Lusztig combinatorics.  We will detail all of this below.

\subsection{Relationship to other tilting formulas}

Before stating our main results, let us comment briefly on the overlap between the topic of this paper and some existing results in the literature. Thanks to formulations by Soergel for quantum groups \cite{Soe}, and by Riche-Williamson for algebraic groups (stated in \cite{RW1}, and proved or re-proved in various contexts in \cite{AMRW} \cite{RW2} \cite{RW3} \cite{BR}), combinatorial algorithms for tilting characters are already known.  Moreover, in the case of quantum groups, the Steinberg quotients $t_{\zeta}(\lambda)$ are governed by the simple characters, and when $p>h$ the latter are given by Lusztig's Character Formula (LCF) from \cite{L} (see \cite[II.H.12]{rags} for an account of this).  In the algebraic setting, the analogous statement is not always true as it requires Donkin's tilting module conjecture to hold (it does not in general \cite{BNPS1} \cite{BNPS3}), we do not know precisely when the LCF describes the simple characters (\cite{AJS}, \cite{Fie2} \cite{W}), and in any case it would not apply to most $\lambda$ that are not $p$-restricted.

The main thrust of this work is to provide a complementary approach to computing tilting characters that applies only to special tilting modules, and exploits all of the unique properties that these modules possess.  The hope is that this can help answer questions that have not yet been answered by existing methods, such as an explanation as to when and why the characters $t(\lambda)$ and $t_{\zeta}(\lambda)$ differ.  Influences on the approach begun in \cite{S1} were Donkin's use of Brauer's formula in \cite[Proposition 5.5]{HTT}, along with work by Ye \cite{Ye} and Doty-Sullivan \cite{DS}.  In this sequel, we push the limits of these methods, while benefiting from the information and direction that the tilting character formulas mentioned above provide.

\subsection{Results and Organization}

Let $\mathbb{X}$ denote the character group of a maximal torus $T$ of $G$, and $\mathbb{Z}[\mathbb{X}]^W$ be the ring of $W$-invariants, where $W$ is the Weyl group of $G$.  The fact that the orbit sums in $t(\lambda)$ appear with weakly increasing multiplicity (when moving from the top orbit down) is due entirely to the fact that for all dominant weights $\mu$, the character product
\begin{equation}\label{E:multiplicationequation}
\chi((p-1)\rho+p\mu)t(\lambda)
\end{equation}
has nonnegative coefficients when expressed in the Weyl character basis.  Though this argument is present in \cite{S1}, its importance is more explicitly isolated here in Theorem \ref{T:MainGen}, where we give a broader statement that highlights the similarity between the orbit-sum multiplicities in Steinberg quotients and those in Weyl characters (a further parallel will be noted shortly).

With this theorem in hand, the extension of the main result from \cite{S1} to the Steinberg quotients $t(\lambda)$ and $t_{\zeta}(\lambda)$, for any dominant $\lambda$, follows from well-known facts about tilting modules.  We also record other features of these characters that, though easy to prove, give interesting perspective.  For example, we obtain a natural framework in which Steinberg quotients become an enlargement of sorts to the set of Weyl characters.  That is, for $\lambda$ dominant, the quotient $t_{\zeta}(p\lambda)=\chi(\lambda)^F$, where $F$ is the Frobenius twist on a character.

In Section 4 we give direct comparisons between algebraic and quantum Steinberg quotients.   From what is already known about the relationship between tilting modules in the respective categories, it follows that the characters $t(\lambda)$ can be written as nonnegative sums of the various $t_{\zeta}(\mu)$.  By using base-changing results from \cite{Lin}, \cite{PS}, and \cite{And2}, we give more precise statements on this relationship.  One interesting consequence is that for the Steinberg quotients $q(\lambda)$ of the $G_1T$-indecomposable modules, we can show (under a minor condition on $p$) that all possible orbits appear with positive multiplicity, even when $q(\lambda)\neq t(\lambda)$.  This could be viewed as an analog in this setting to the Premet-Suprunenko theorem on the weight sets of the $p$-restricted simple $G$-modules \cite{Pr} \cite{Su}.

Suppose now that $p \ge h$, where $h$ is the Coxeter number of the underlying root system, and assume that $\lambda-\rho$ is a $p$-regular weight.  Applying work by Kato \cite{K}, we show in Section 5 that when the LCF describes the simple characters (in the respective settings), then the orbit multiplicities in $t_{\zeta}(\lambda)$ are given by evaluations of Kazhdan-Lusztig polynomials, and the same is true for $q(\lambda)$ when $\lambda$ is $p$-restricted.  The hypothesis does hold in the quantum setting when $p>h$, but will not hold in general in the algebraic setting unless $p \gg h$.  We should also point out that when this condition holds in both settings, then there is an agreement $t_{\zeta}(\lambda)=q(\lambda)$ for all $p$-restricted weights $\lambda$ (i.e. including the $p$-singular ones).  Of course we also have $t_{\zeta}(\lambda)=t(\lambda)$ provided that $q(\lambda)=t(\lambda)$ (this last equality always holding when $p \ge 2h-4$).  In making the connection to Kazhdan-Lusztig polynomials, the heavy lifting is done by Kato's paper along with Fiebig's detailed account of it \cite{Fie}.

Our ultimate goal is to find character formulas for Steinberg quotients that can, at minimum, differentiate between $t_{\zeta}(\lambda)$ and $t(\lambda)$, and that will lend themselves to reasonable dimension formulas (akin to $p$-versions of Weyl's dimension formula).  In order to achieve this, it is necessary to find the defining properties of Steinberg quotients.  We initiate this investigation in Section 6.

In view of the results in Section 3, we begin by defining the character $\mathcal{M}_p(\lambda)$ to be the smallest element in $\mathbb{Z}[\mathbb{X}]^W$ that satisfies (\ref{E:multiplicationequation}) and has $\lambda$ as its highest weight.  In Theorem \ref{T:Thisisenough} we prove that this property can be checked by multiplying against a finite number of characters of the form $\chi((p-1)\rho+p\mu)$, though in general checking against $\chi((p-1)\rho)$ alone will not be sufficient.  We then give an explicit a process for computing $\mathcal{M}_p(\lambda)$ in Algorithm \ref{Algorithm1}.

Since the $t_{\zeta}(\lambda)$ are lower bounds on the $t(\lambda)$, and can be computed by ordinary Kazhdan-Lusztig polynomials when $p>h$, it is both natural and possible to check to see how close these are to the $\mathcal{M}_p(\lambda)$.  Surprisingly, in all of the computations that we were able to make, they were very close.  They were equal for all restricted $\lambda$ with $\lambda-\rho$ a $p$-regular weight for root systems $A_1,A_2,A_3$ (the first two being trivial), and for almost all such weights in type $A_4$, and for many large weights in type $A_5$.  The cases of character equality are nontrivial, with orbit multiplicities as large as $23$ occurring, and in the few cases we found in which they were not equal, it was by the smallest margin possible (a multiplicity difference of $1$ on the lowest orbits).  In many of these cases we can also compute the characters $t(\lambda)$, thanks to knowing that $t(\lambda)=q(\lambda)$ from \cite{BNPS2} and \cite{BNPS4}, and that the LCF describes the $p$-restricted simple characters from \cite[II.8.22]{rags} and \cite{Sc}.

\subsection{Acknowledgements}

We thank Frank L\"{u}beck for generously sharing the extensive computations of Kazhdan-Lusztig polynomials made by the algorithms described in \cite{Lu}.

\section{Notation and Recollections}

\subsection{Weyl groups, Roots, and Weights}
We give a brief overview on our notation.  For the most part it follows \cite{rags}, and any notation not explicitly mentioned may be assumed to be consistent with that.

Let $\Bbbk$ be an algebraically closed field of characteristic $p>0$.  By standard arguments we may consider $G$ to be a simple and simply connected group, the results for which can be generalized to any $G$ connected reductive.

Fix a maximal torus $T$ inside a Borel subgroup $B$ of $G$.  The root system is denoted $\Phi$, and we fix a set of simple roots $S=\{\alpha_1,\alpha_2,\ldots,\alpha_n\}$, where $n$ is the rank of $T$.  This determines a set of positive roots $\Phi^+ \subseteq \Phi$.  Denote by $\mathbb{X}$ the character group of $T$ (also called the set of weights).  Each $\alpha \in \Phi^+$ has a corresponding coroot $\alpha^{\vee}$.  The highest short root is $\alpha_0$, and $\alpha_0^{\vee}$ is the highest coroot.  For each $\lambda \in \mathbb{X}$ and coroot $\alpha^{\vee}$ we denote the natural pairing by $\langle \lambda, \alpha^{\vee} \rangle$.  The set of dominant weights is $\mathbb{X}^+$, and it generated over $\mathbb{Z}_{\ge 0}$ by the fundamental dominant weights $\{\varpi_1, \varpi_2, \ldots, \varpi_n\}$, which are defined by the property that $\langle \varpi_i, \alpha_j^{\vee} \rangle = \delta_{ij}$.
For each $m \ge 0$, we define
$$\mathbb{X}_m = \{a_1\varpi_1+\cdots a_n\varpi_n \mid 0 \le a_i < m\} \subseteq \mathbb{X}^+.$$
Thus $\mathbb{X}_p$ denotes the $p$-restricted dominant weights (we note that we have often just used $\mathbb{X}_p$ for this set in the past, but require the finer notation in this paper).

The root lattice is $\mathbb{Z}\Phi \subseteq \mathbb{X}$.  The element $\rho$ is the half-sum of the positive roots, or equivalently is the sum of the fundamental dominant weights.  The Weyl group is $W$, and $w_0$ is its longest element.  For any $\lambda \in \mathbb{X}$, we let $W_{\lambda}$ denote the stabilizer of $\lambda$, while $W\lambda$ is the $W$-orbit of $\lambda$.

The standard partial order on $\mathbb{X}$ is denoted as $\le$.  The affine Weyl group is
$$W_p \cong W \ltimes p\mathbb{Z}\Phi,$$
and it acts on $\mathbb{X}$.  It can be shown that the image of $W_p$ in the group of affine transformtions of $\mathbb{E}= \mathbb{R} \otimes_{\mathbb{Z}} \mathbb{X}$ is generated by affine reflections of the form
$$s_{\alpha,np}(\lambda)=\lambda - (\langle \lambda, \alpha^{\vee} \rangle - np)\alpha$$ for all $\alpha \in \Phi^+$ and $n \in \mathbb{Z}$. 
For each $w \in W_p$ and $\lambda \in \mathbb{E}$, we denote the action of $w$ on $\lambda$ by juxtaposition, as $w\lambda$.  We will primary by interested in the ``dot action" of $W_p$, where
$$w \bigcdot \lambda = w(\lambda + \rho) - \rho.$$

For each $\alpha \in \Phi^+, n \in \mathbb{Z}$, there is a hyperplane in $\mathbb{E}$ defined by
$$H_{\alpha,np}= \{ \lambda \in \mathbb{E} \mid \langle \lambda+\rho, \alpha^{\vee} \rangle = np \}.$$
The affine reflection of $\mathbb{E}$ about $H_{\alpha,np}$ is precisely the dot action of $s_{\alpha,np}$ on $\mathbb{E}$.  The partial ordering $\uparrow$ on $\mathbb{X}$ is the minimal such ordering with the property that
$$(s_{\alpha,np} \bigcdot \lambda) \uparrow \lambda \quad \text{if} \quad (s_{\alpha,np} \bigcdot \lambda) \le \lambda,$$
and
$$\lambda \uparrow (s_{\alpha,np} \bigcdot \lambda) \quad \text{if} \quad \lambda \le (s_{\alpha,np} \bigcdot \lambda).$$
More generally, $\lambda \uparrow \mu$ if there are affine reflections $s_1,\ldots,s_m$ such that
\begin{equation}\label{E:alcovestring}
\lambda \le s_1 \bigcdot \lambda \le s_2 \bigcdot s_1 \bigcdot \lambda \le \cdots \le s_m \bigcdot \cdots \bigcdot s_1 \lambda = \mu.
\end{equation}
Properties of this ordering are noted in \cite[II.6.4]{rags}.

The hyperplanes $H_{\alpha,np}$ divide $\mathbb{E}$ up into a system of alcoves and facets.  The alcoves contain points from $\mathbb{X}$ if and only if $p \ge h$.  The elements in the alcoves are called $p$-regular weights.  They are those weights $\lambda$ such that
$$\langle \lambda + \rho, \alpha^{\vee} \rangle \not\in p\mathbb{Z}$$
for all $\alpha \in \Phi^+$.

Let $\mathcal{C}$ denote the set of all alcoves of $\mathbb{E}$.  The lowest dominant alcove $C_0$ is the alcove
$$C_0 = \{ \lambda \in \mathbb{E} \mid 0< \langle \lambda+\rho, \alpha^{\vee} \rangle < p\}.$$
The action of $W_p$ on $\mathcal{C}$ is simply transitive, hence for any alcove $C \in \mathcal{C}$ there is a unique element $w \in W_p$ such that $w.C_0=C$.

An alcove $C$ is called dominant if $0< \langle \lambda+\rho, \alpha_i^{\vee} \rangle$ for all $i$, and an alcove is $p$-restricted if $0< \langle \lambda+\rho, \alpha_i^{\vee} \rangle < p$ for all $i$.

The group $W_p$ is a Coxeter group with generators $\{s_0,s_1,\ldots,s_n\}$, where for $1 \le i \le n$ we have $s_i = s_{\alpha_i,0}$, and $s_0=s_{\alpha_0,p}$.  These generators are just the affine reflections about the hyperplanes that extend the $n+1$ walls of the fundamental alcove.

\subsection{Characters}

The Grothendieck ring of the category of finite dimensional $T$-modules is isomorphic to the group algebra $\mathbb{Z}[\mathbb{X}]$.  For each $\mu \in \mathbb{X}$, we denote by $e(\mu)$ the corresponding basis element in $\mathbb{Z}[\mathbb{X}]$.  Since $\mathbb{Z}[\mathbb{X}]$ is the group algebra of a free abelian group of rank $n$, it is isomorphic to the ring of Laurent polynomials over $\mathbb{Z}$ in $n$ indeterminants.  In particular, $\mathbb{Z}[\mathbb{X}]$ is an integral domain, so the cancellation property for ring multiplication holds.

We denote by $s(\mu)$ the sum of the weights in the $W$-orbit of $\mu$.  These elements form a basis of $\mathbb{Z}[\mathbb{X}]^W$.

Recall that for $\sigma = \sum a_{\mu}e(\mu) \in \mathbb{Z}[\mathbb{X}]$, is ``dual" and ``Frobenius twist" are
$$\sigma^*=\sum a_{\mu}e(-\mu),$$
and
$$\sigma^F=\sum a_{\mu}e(p\mu)$$
respectively.  If $\sigma = \text{ch}(M)$ for a $T$-module $M$, then $\sigma^* = \text{ch}(M^*)$, and $\sigma^F = \text{ch}(M^{(1)})$.

\subsection{$G$-modules and $G_1T$-modules}
For each $\lambda \in \mathbb{X}^+$ there is a simple $G$-module $L(\lambda)$, a costandard module $\nabla(\lambda)=\Ind_B^G \lambda$, a standard module $\Delta(\lambda)=(\Ind_B^G -w_0\lambda)^*$, and an indecomposable tilting module $T(\lambda)$.  The modules $\Delta(\lambda)$ and $\nabla(\lambda)$ each have character given by the Euler characteristic
$$\chi(\lambda)= \sum_{i \ge 0}(-1)^i (\text{ch} \, R^i \Ind_B^G \lambda).$$

By the strong linkage principle \cite[II.6.13]{rags}, $[\nabla(\lambda):L(\mu)] > 0$ implies that $\mu \uparrow \lambda$.  In a similar way, write $(T(\lambda):\chi(\mu))$ for the multiplicity of $\nabla(\mu)$ in a good filtration of $T(\lambda)$ (equal to the multiplicity of $\Delta(\mu)$ in a Weyl filtration of $T(\lambda)$.  If $(T(\lambda):\chi(\mu)) > 0$, then again $\mu \uparrow \lambda$ \cite[II.E.3]{rags}. 

For each $\lambda \in X$ there is a simple $G_1T$-module $\widehat{L}_1(\lambda)$, a projective indecomposable $G_1T$-module $\widehat{Q}_1(\lambda)$, and ``baby Verma modules"
$$\widehat{Z}_1(\lambda) = \textup{coind}_{B_1^+T}^{G_1T} \lambda, \qquad \widehat{Z}_1^{\prime}(\lambda) = \textup{ind}_{B_1T}^{G_1T} \lambda.$$

Fix a Frobenius endomorphism $F:G \rightarrow G$.  For any $G$-module $M$, we denote by $M^{(1)}$ its twist under $F$.

\subsection{Quantum Groups}

Let $v$ be an indeterminate, and $\mathbb{Q}(v)$ the fraction field of $\mathbb{Q}[v]$.  The quantum group $\U{v}$ is the $\mathbb{Q}(v)$-algebra with generators $E_{\alpha}, F_{\alpha}, K_{\alpha}^{\pm 1}$, for $\alpha \in \Pi$, satisfying the quantum Serre relations of \cite[H.2]{rags}.  Over the subring $A=\mathbb{Z}[v,v^{-1}]$, we denote by $\U{A}$ Lusztig's divided power integral form for $\U{v}$.  The algebra $\U{A}$ is free as an $A$-module, and the multiplication map $\U{A} \otimes_A \mathbb{Q}(v) \rightarrow \U{v}$ is an isomorphism of rings.  

For any commutative $A$-algebra $B$ one obtains the quantum group $\U{B} = \U{A} \otimes_A B$.  Let $\zeta$ be a complex primitive $p$-th root of unity.  Specializing $v=\zeta$ makes $\mathbb{C}$ into an $A$-algebra.  We now denote by $\U{\zeta}$ the resulting quantum group $\U{A} \otimes_A \mathbb{C}$.

The category of finite dimensional $\U{\zeta}$-modules, denoted $\U{\zeta}$-mod, has many similarities to that of $G$-mod.  First, it is known that the category breaks into a direct sum of subcategories based on central characters, and we restrict our attention only to the subcategory of  type \textbf{1} $\U{\zeta}$-modules.  In this subcategory, for each $\lambda \in \mathbb{X}^+$ there is a simple module $L_{\zeta}(\lambda)$, a standard module $\Delta_{\zeta}(\lambda)$, a costandard module $\nabla_{\zeta}(\lambda)$, and an indecomposable tilting module $T_{\zeta}(\lambda)$.

As we are considering only type \textbf{1} modules, we will regard the quantum Frobenius morphism as a surjective homomorphism $F:\U{\zeta} \rightarrow \mathcal{U}(\mathfrak{g}_{\mathbb{C}})$ (note then that the image of $F$ as defined here is the quotient of the image of the more commonly defined quantum Frobenius morphism).  Let $L_{\mathbb{C}}(\lambda)$ denote the irreducible $\mathfrak{g}_{\mathbb{C}}$-module of highest weight $\lambda$.  The pullback under $F$ will be denoted $L_{\mathbb{C}}(\lambda)^F$.  If $\lambda=\lambda_0+p\lambda_1$ with $\lambda_0 \in \mathbb{X}_p$ and $\lambda_1 \in \mathbb{X}^+$, then there is an isomorphism of $\U{\zeta}$-modules
$$L_{\zeta}(\lambda) \cong L_{\zeta}(\lambda_0) \otimes L_{\mathbb{C}}(\lambda_1)^F.$$

The character of a type \textbf{1} finite dimensional $\U{\zeta}$-module is also an element in $\mathbb{Z}[\mathbb{X}]^W$.  We have
$$\chi(\lambda)=\text{ch}(\Delta_{\zeta}(\lambda)) = \text{ch}(\nabla_{\zeta}(\lambda)).$$

The small quantum group is denoted $\Usmall{\zeta}$.

\section{Steinberg Quotients}

\subsection{}\label{SS:SteinbergRecall}

In \cite{S1} we defined, for each $\lambda \in \mathbb{X}_p$, the Steinberg quotients
$$t(\lambda) = T((p-1)\rho+\lambda)/\chi((p-1)\rho)$$
and
$$q(\lambda) = \widehat{Q}_1((p-1)\rho+w_0\lambda)/\chi((p-1)\rho).$$

\vspace{.1in}
For $p \ge 2h-4$ these two characters are the same \cite{BNPS4}, though in general they can differ (see also \cite{BNPS1}, \cite{BNPS2}, and \cite{BNPS3}).  We note that \cite{BNPS4} actually utilizes the language of Steinberg quotients, and establishes nice properties about their restrictions to Levi subgroups.

There are non-negative integers $a_{\mu,\lambda}$ and $b_{\mu,\lambda}$ \footnote{In \cite{S1} we denoted the double index $a_{\mu,\lambda}$ as $a_{\mu}^{\lambda}$, and $b_{\mu,\lambda}$ as $b_{\mu}^{\lambda}$.} such that
$$q(\lambda) = \sum a_{\mu,\lambda} s(\mu)$$
and
$$t(\lambda) = \sum b_{\mu,\lambda} s(\mu).$$
Computing Steinberg quotients then amounts to determining these orbit multiplicities, and the Steinberg quotients in turn give the characters of the relevant modules upon multiplying by $\chi((p-1)\rho)$.

Some preliminary properties that can be established for these coefficients is that for all $\lambda \in \mathbb{X}_p$ and $\mu \in \mathbb{X}^+$:
\begin{enumerate}
    \item $a_{\lambda,\lambda}=b_{\lambda,\lambda}=1$
    \item $a_{\mu,\lambda}\le b_{\mu,\lambda}$
    \item $a_{\mu,\lambda} \ne 0$ implies $(\mu-\rho) \uparrow (\lambda-\rho)$.
    \item $a_{\mu,\lambda} = b_{\mu,\lambda}$ \quad if \quad $p \ge 2h-4$
\end{enumerate}

\bigskip
The main result proved in \cite{S1} was a multiplicity pattern in the coefficients $b_{\mu,\lambda}$.

\begin{theorem}\cite[Theorem 3.2.2]{S1}\label{T:OldMain}
Let $\lambda \in \mathbb{X}^+$, and let $b_{\mu,\lambda}$ be non-negative integers such that
$$t(\lambda) = \sum_{\mu \in \mathbb{X}^+} b_{\mu,\lambda}s(\mu).$$
For dominant weights $\mu,\mu^{\prime}$,
$$\textup{if} \quad (\mu-\rho) \uparrow (\mu^{\prime}-\rho)\textup{, \qquad then \quad} b_{\mu,\lambda} \ge b_{\mu^{\prime},\lambda}.$$
\end{theorem}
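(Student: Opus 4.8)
The plan is to reduce this multiplicity inequality to the nonnegativity of the character product in \eqref{E:multiplicationequation}, exactly as indicated in the introduction. The starting observation is that it suffices to treat the case of a single ``elementary'' step in the $\uparrow$ ordering: by \eqref{E:alcovestring}, any relation $(\mu-\rho)\uparrow(\mu'-\rho)$ factors through a chain of reflections $s_{\alpha,np}$, so if I can show $b_{\mu,\lambda}\ge b_{\nu,\lambda}$ whenever $\nu$ is obtained from $\mu$ by a single ``upward'' reflection $\nu-\rho = s_{\alpha,np}\bigcdot(\mu-\rho)$ with $(\mu-\rho)\le(\nu-\rho)$, the general statement follows by transitivity. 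So fix such a pair $\mu,\nu$ and the hyperplane $H_{\alpha,np}$ separating them.

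Next I would isolate the key structural input: that $\chi((p-1)\rho)\,t(\lambda) = \operatorname{ch} T((p-1)\rho+\lambda)$ has a good (Weyl) filtration, hence $\chi((p-1)\rho)\,t(\lambda) = \sum_\gamma c_\gamma\,\chi(\gamma)$ with all $c_\gamma\ge 0$; equivalently $t(\lambda)$ itself, after multiplying by $\chi((p-1)\rho)$, lies in the nonnegative span of Weyl characters. More to the point, the hypothesis that product \eqref{E:multiplicationequation} has nonnegative Weyl-character coefficients for \emph{every} dominant $\mu$ is precisely what I will exploit. The mechanism relating this to orbit-sum multiplicities is Brauer/Jantzen-style: writing a $W$-invariant element $\sigma=\sum b_\mu s(\mu)$, the coefficient $b_\mu$ can be extracted from the alcove-wall behavior of $\chi((p-1)\rho)\sigma$. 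Concretely, one uses that $\chi(\gamma)$ vanishes on $p$-singular weights (when $\gamma+\rho$ lies on a wall, $\chi(\gamma)=0$, reflecting $\chi(\gamma)=s_{\alpha,np}\bigcdot$-antisymmetry), so the antisymmetrization $\sum_{w\in W}(-1)^{\ell(w)}w(-)$ of $\chi((p-1)\rho)\sigma$ against a suitable test weight, combined with the $s_{\alpha,np}$-symmetry properties, yields $b_\mu - b_\nu$ as a nonnegative combination of the $c_\gamma$'s appearing in the expansion of an appropriate product $\chi((p-1)\rho+p\cdot(\text{something}))\,t(\lambda)$. This is where the assumption that the product is nonnegative for all dominant twisting weights $p\mu$, not just $\mu=0$, does the work: the test character needed to compare the $\mu$-orbit with the reflected $\nu$-orbit is itself of the form $\chi((p-1)\rho+p\mu')$ up to Weyl-group translation.

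I would then carry out the comparison at the level of $\mathbb{Z}[\mathbb{X}]$ by pairing with the Weyl denominator: multiply $\chi((p-1)\rho)t(\lambda)$ by $\sum_{w\in W}(-1)^{\ell(w)}e(w\rho)$ and read off coefficients of $e(\text{weight})$ along the reflection $s_{\alpha,np}$. The difference of coefficients at a weight and its $s_{\alpha,np}\bigcdot$-image, on the one hand, equals (up to sign) $b_\mu-b_\nu$ after accounting for which $W$-orbit each weight sits in, and on the other hand equals the corresponding coefficient in the Weyl-character expansion of a product \eqref{E:multiplicationequation}-type character, which is $\ge 0$ by hypothesis. Care is needed with stabilizers: if $\mu$ or $\nu$ is $p$-singular or has nontrivial $W_\mu$, some orbit sums degenerate and the bookkeeping of multiplicities shifts; I expect the cleanest route is to first prove the inequality for $\mu,\nu$ both $p$-regular and dominant (where alcove combinatorics is transparent), then handle walls by a limiting/specialization argument or by the translation principle relating singular and regular Steinberg quotients.

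The main obstacle I anticipate is precisely this last bookkeeping step: making the passage from ``nonnegativity of the Weyl-character expansion of the product'' to ``$b_\mu\ge b_\nu$'' fully rigorous requires pinning down exactly which product $\chi((p-1)\rho+p\mu')t(\lambda)$ controls the comparison across a given hyperplane $H_{\alpha,np}$, and verifying that the relevant coefficient extraction is an honest nonnegative-integer-combination identity rather than merely an inequality up to error terms. The linkage principle guarantees that only finitely many orbits appear, which bounds the combinatorics, but one must still track signs coming from $(-1)^{\ell(w)}$ and from the orientation of the reflection (whether it moves weights up or down), and confirm these conspire correctly. Once the single-reflection case is nailed down, assembling the chain \eqref{E:alcovestring} into the full statement is immediate.
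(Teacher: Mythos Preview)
Your overall architecture is right and matches the paper: reduce to a single minimal step $(\mu-\rho)\uparrow(\nu-\rho)$ given by one affine reflection $s_{\alpha,np}$, and then show that $b_{\mu,\lambda}-b_{\nu,\lambda}$ appears as a Weyl-character coefficient in some product $\chi((p-1)\rho+p\gamma)\,t(\lambda)$, which is nonnegative because that product is a good filtration character. So far so good.

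The gap is in your proposed extraction mechanism. Multiplying by the Weyl denominator and ``reading off coefficients along $s_{\alpha,np}$'' does not work as stated: $s_{\alpha,np}$ is an \emph{affine} reflection, so the antisymmetrizer $\sum_{w\in W}(-1)^{\ell(w)}w(-)$ has no direct relation to it, and the remark that ``$\chi(\gamma)$ vanishes on $p$-singular weights'' conflates the finite walls (where $\chi$ does vanish) with the affine ones (where it does not). The paper's mechanism is different and more concrete. One first picks $w\in W$ with $w\alpha=-\alpha_i$ for a simple root $\alpha_i$; this carries the $\alpha$-string containing $\mu,\nu$ to an $\alpha_i$-string on the antidominant side. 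One then chooses $\gamma=\sum m_j\varpi_j$ with $m_i=n-1$ and all other $m_j$ large enough that, applying Brauer's formula
\[
\chi((p-1)\rho+p\gamma)\,t(\lambda)=\sum_{\sigma}b_{\sigma,\lambda}\sum_{\tau\in W\sigma}\chi((p-1)\rho+p\gamma+\tau),
\]
every term either is already a Weyl character or straightens by the \emph{single} finite reflection $s_i$ (this is where Lemmas~\ref{L:NeededReflections} and~\ref{L:StraighteningInterior} enter). With this choice, $(p-1)\rho+p\gamma+w\mu$ is dominant and $s_i\bigcdot((p-1)\rho+p\gamma+w\nu)=(p-1)\rho+p\gamma+w\mu$, so the affine reflection $s_{\alpha,np}$ has been converted into the finite simple reflection $s_i$, and the coefficient of $\chi((p-1)\rho+p\gamma+w\mu)$ in the straightened expansion is exactly $b_{\mu,\lambda}-b_{\nu,\lambda}$. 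There is no need for a separate $p$-singular or wall case: the argument is purely at the level of characters and applies uniformly. Your anticipated ``main obstacle'' is thus resolved not by denominator tricks or translation-principle limits, but by this explicit choice of $\gamma$ and $w$; once you see that, the proof is short.
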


We will generalize this result in Theorem \ref{T:MainGen}, which distills the essential character arguments.  The original proof, and therefore this generalized one also, was inspired by ideas due to Ye \cite{Ye}, Doty and Sullivan \cite{DS}, and Donkin \cite[Proposition 5.5]{HTT}.

The statement give in  Theorem \ref{T:MainGen} will be about formal characters, but applies to Steinberg quotients thanks to the following fundamental results that hold in the category of $G$-modules.

\bigskip
\begin{itemize}
    \item For every $w \in W$,
    $$\chi(w \bigcdot \lambda)=(-1)^{\ell(w)}\chi(\lambda).$$
    \item Brauer's formula: for all $\lambda,\mu \in \mathbb{X}^+$,
    $$\chi(\lambda)s(\mu)=\sum_{w \in W/W_{\mu}} \chi(\lambda+w\mu).$$
    \item The Andersen-Haboush Theorem: for all $\mu \in \mathbb{X}^+$,
    $$\nabla((p-1)\rho+p\mu) \cong \St \otimes \nabla(\mu)^{(1)}.$$
    \item For all $\lambda, \mu \in \mathbb{X}^+$, the module
    $$T((p-1)\rho+\lambda) \otimes \nabla(\mu)^{(1)}$$
    has a good filtration.
\end{itemize}

\bigskip
The last result follows from the Andersen-Haboush Theorem together with the fact that the tensor product of good filtration modules has a good filtration (proved for certain $p$ by Wang, most $p$ by Donkin, and all $p$ by Mathieu).  One then observes that $T((p-1)\rho+\lambda) \otimes \nabla(\mu)^{(1)}$ is a direct summand of
$$\St \otimes T(\lambda) \otimes \nabla(\mu)^{(1)} \cong \nabla((p-1)\rho + p\mu) \otimes T(\lambda).$$

\subsection{}

In this subsection we collect a number of lemmas that will simplify proofs both in this section, and then later on in the paper.

\begin{lemma}\label{L:dotandregularaction}
For all $w \in W$ and $\lambda, \mu \in \mathbb{X}$,
$$w \bigcdot (\lambda + \mu) = w \bigcdot \lambda + w\mu.$$
\end{lemma}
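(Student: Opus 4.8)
The statement to prove is the identity $w \bigcdot (\lambda + \mu) = w \bigcdot \lambda + w\mu$ for $w \in W$ and $\lambda, \mu \in \mathbb{X}$. This is a direct unwinding of the definition of the dot action, so the plan is simply to expand both sides and use the linearity of the ordinary $W$-action on $\mathbb{E}$ (and hence on $\mathbb{X}$).

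The plan is as follows. Recall from the setup that $w \bigcdot \nu = w(\nu + \rho) - \rho$ for any $\nu$, where $w(\cdot)$ denotes the ordinary linear action of $W$ on $\mathbb{E} = \mathbb{R} \otimes_{\mathbb{Z}} \mathbb{X}$. First I would write out the left-hand side: $w \bigcdot (\lambda + \mu) = w(\lambda + \mu + \rho) - \rho$. Since $w$ acts linearly, this equals $w(\lambda + \rho) + w\mu - \rho$. Then I would write out the right-hand side: $w \bigcdot \lambda + w\mu = \bigl(w(\lambda + \rho) - \rho\bigr) + w\mu$. Comparing the two expressions, both are equal to $w(\lambda + \rho) + w\mu - \rho$, so the identity follows. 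One could also phrase this as: $w \bigcdot (\lambda + \mu) - w\mu = w(\lambda + \mu + \rho) - \rho - w\mu = w(\lambda + \rho) - \rho = w \bigcdot \lambda$.

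There is essentially no obstacle here — the only thing being used is that the underlying $W$-action is $\mathbb{Z}$-linear (indeed $\mathbb{R}$-linear on $\mathbb{E}$) together with the fact that $W$ preserves the lattice $\mathbb{X}$, so that all the quantities appearing do lie in $\mathbb{X}$. The content is purely the observation that the dot action is an affine action whose linear part is the ordinary action, so translating the argument by $\mu$ commutes with $w$ up to applying $w$ to $\mu$. No appeal to any earlier result in the excerpt is required beyond the definition of $\bigcdot$.
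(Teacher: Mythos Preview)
Your proof is correct and is essentially identical to the paper's own argument: expand $w \bigcdot (\lambda+\mu) = w(\lambda+\mu+\rho)-\rho$, use linearity of the ordinary $W$-action to split off $w\mu$, and recognize the remainder as $w \bigcdot \lambda$.
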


\begin{proof}
We have
\begin{align*}
w \bigcdot (\lambda+\mu) & = w(\lambda+\mu+\rho)-\rho\\
& = w(\lambda+\rho) -\rho + w\mu\\
& = w \bigcdot \lambda + w\mu.\\
\end{align*}

\end{proof}

\begin{lemma}\label{L:StraighteningInterior}
Let $\lambda \in \mathbb{X}$, and $\gamma \in \mathbb{X}^+$.  If for some $\alpha \in \Phi^+$,
$$\langle \lambda+\gamma+\rho,\alpha^{\vee} \rangle < 0,$$
then
$$s_{\alpha} \bigcdot (\lambda+\gamma) - \gamma$$
lies strictly between $\lambda$ and $s_{\alpha}\lambda$.  In particular, this weight is in the interior of $\textup{conv}(W\lambda)$.
\end{lemma}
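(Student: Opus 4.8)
The plan is to work in the Euclidean space $\mathbb{E}$ and reduce the claim to a statement about where the point $s_\alpha \bigcdot (\lambda+\gamma) - \gamma$ sits on the line segment between $\lambda$ and $s_\alpha \lambda$. First I would use Lemma \ref{L:dotandregularaction} (or just unwind the definition of the dot action) to compute
\[
s_\alpha \bigcdot (\lambda+\gamma) - \gamma = s_\alpha(\lambda+\gamma+\rho) - \rho - \gamma = s_\alpha \lambda + \big(s_\alpha(\gamma+\rho) - (\gamma+\rho)\big) + \big(s_\alpha\lambda - s_\alpha\lambda\big),
\]
more usefully written as $\lambda + c\,\alpha$ where, since $s_\alpha x = x - \langle x,\alpha^\vee\rangle\alpha$, one finds $s_\alpha\bigcdot(\lambda+\gamma)-\gamma = \lambda - \langle \lambda+\gamma+\rho,\alpha^\vee\rangle\,\alpha$. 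On the other hand $s_\alpha\lambda = \lambda - \langle\lambda,\alpha^\vee\rangle\alpha$. So both $\lambda$, the new weight, and $s_\alpha\lambda$ lie on the affine line $\lambda + t\alpha$, at parameter values $t=0$, $t = -\langle\lambda+\gamma+\rho,\alpha^\vee\rangle$, and $t = -\langle\lambda,\alpha^\vee\rangle$ respectively.

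Next I would compare these three parameter values. The hypothesis $\langle\lambda+\gamma+\rho,\alpha^\vee\rangle < 0$ says the middle parameter is strictly positive. Since $\gamma \in \mathbb{X}^+$ and $\rho$ is dominant, and $\alpha\in\Phi^+$, we have $\langle\gamma+\rho,\alpha^\vee\rangle \geq \langle\rho,\alpha^\vee\rangle \geq 1 > 0$; hence $\langle\lambda,\alpha^\vee\rangle = \langle\lambda+\gamma+\rho,\alpha^\vee\rangle - \langle\gamma+\rho,\alpha^\vee\rangle < \langle\lambda+\gamma+\rho,\alpha^\vee\rangle < 0$. Negating, this gives $0 < -\langle\lambda+\gamma+\rho,\alpha^\vee\rangle < -\langle\lambda,\alpha^\vee\rangle$, i.e. the parameter of the new weight lies strictly between the parameters $0$ of $\lambda$ and $-\langle\lambda,\alpha^\vee\rangle$ of $s_\alpha\lambda$. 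Therefore $s_\alpha\bigcdot(\lambda+\gamma)-\gamma$ lies strictly between $\lambda$ and $s_\alpha\lambda$ on their connecting segment.

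Finally, to get the ``interior of $\mathrm{conv}(W\lambda)$'' conclusion, I would note that $\lambda$ and $s_\alpha\lambda$ are both vertices of the polytope $\mathrm{conv}(W\lambda)$ (they are extreme points of the orbit, up to the degenerate case $s_\alpha\lambda = \lambda$, which is excluded here since $\langle\lambda,\alpha^\vee\rangle\neq 0$), and a point strictly between two distinct vertices of a convex polytope lies in the relative interior of the face they span; since $W\lambda$ spans (a coset of) the full ambient space in the relevant sense — more precisely one should phrase ``interior'' as relative to the affine span of $W\lambda$, which is the standard convention here — the open segment is contained in the interior. The one point worth stating carefully is this last convexity remark, including the implicit convention that ``interior'' means relative interior when $\lambda$ is not regular; otherwise the argument is entirely a one-line computation plus the positivity estimate $\langle\gamma+\rho,\alpha^\vee\rangle\geq 1$. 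I expect no real obstacle: the only thing to be careful about is keeping track of signs in the pairing and invoking $\gamma$ dominant at exactly the right spot.
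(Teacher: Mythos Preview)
Your computation and the positivity estimate $\langle\gamma+\rho,\alpha^\vee\rangle>0$ are exactly what the paper does; the ``strictly between'' conclusion is obtained in the same way.

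Your final paragraph, however, has a real gap: a point strictly between two distinct vertices of a convex polytope need not lie in the interior of the polytope. In type $A_2$ take $\lambda=-2\varpi_1+3\varpi_2$, $\gamma=0$, $\alpha=\alpha_1$; then $\langle\lambda+\rho,\alpha_1^\vee\rangle=-1<0$, and the resulting weight $s_{\alpha_1}\bigcdot\lambda=2\varpi_2$ is the midpoint of the \emph{edge} of the hexagon $\mathrm{conv}(W\lambda)$ joining $\lambda$ to $s_{\alpha_1}\lambda=2\varpi_1+\varpi_2$, so it sits on the boundary, not in the interior. Hence the ``in particular'' clause does not follow from ``strictly between'' alone, and your appeal to ``relative interior of the face they span'' actually cuts the wrong way. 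That said, the paper's own proof does not justify the interior assertion either, and every subsequent use of this lemma in the paper (Lemma~\ref{L:NeededReflections} and the verification of Algorithm~\ref{Algorithm1}) only requires that the weight lie in $\mathrm{conv}(W\lambda)$, which is immediate from ``strictly between''.
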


\begin{proof}
In this case, since
$$\langle \gamma+\rho,\alpha^{\vee} \rangle > 0,$$
we must have that
$$0 > \langle \lambda + \gamma+\rho,\alpha^{\vee} \rangle > \langle \lambda,\alpha^{\vee} \rangle.$$
Therefore
\begin{align*}
s_{\alpha} \bigcdot (\lambda+\gamma) - \gamma & = s_{\alpha}(\lambda+\gamma+\rho)-\rho-\gamma\\
& = \lambda - \langle \lambda + \gamma+\rho,\alpha^{\vee} \rangle \alpha\\
& < \lambda - \langle \lambda,\alpha^{\vee} \rangle \alpha\\
& = s_{\alpha}\lambda.
\end{align*}
\end{proof}

\begin{lemma}\label{L:NeededReflections}
Let $\lambda, \gamma \in \mathbb{X}^+$.  Let $J \subset \Pi$ be the set of all simple roots $\alpha_i$ such that
$$\langle \gamma+\rho,\alpha_i^{\vee} \rangle \le \langle \lambda, \alpha_0^{\vee} \rangle.$$
Then for any $w_0\lambda \le \mu \le \lambda$, there is a $w \in W_J$ such that
$$w \bigcdot (\gamma+\mu) \in \mathbb{X}^+ -\rho.$$
\end{lemma}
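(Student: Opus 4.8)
The plan is to produce the required element $w \in W_J$ directly, by an explicit straightening procedure that moves $\gamma + \mu$ into the (closed) dominant region shifted by $-\rho$, while staying inside the parabolic $W_J$. The point of the hypothesis on $J$ is that the simple roots we are allowed to reflect in are exactly those whose associated wall lies "close enough to the origin" that a weight of the form $\gamma + \mu$, with $\mu$ in the convex hull of $W\lambda$, cannot be pushed out of range by such a reflection. First I would reduce to understanding the pairing $\langle \gamma + \mu + \rho, \alpha_i^\vee\rangle$ for $\alpha_i \in \Pi$. If this is $\ge 0$ for every simple $\alpha_i \in J$, and if one can show it is automatically $\ge 0$ for every $\alpha_i \notin J$ as well (this is where the bound $\langle \gamma+\rho,\alpha_i^\vee\rangle > \langle\lambda,\alpha_0^\vee\rangle \ge \langle \mu, \alpha_i^\vee\rangle_{-}$ enters, using $w_0\lambda \le \mu \le \lambda$ to bound $|\langle\mu,\alpha_i^\vee\rangle|$ by $\langle \lambda,\alpha_0^\vee\rangle$), then $\gamma+\mu \in \mathbb{X}^+ - \rho$ already and we take $w = 1$.

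Otherwise there is some $\alpha_i \in J$ with $\langle \gamma + \mu + \rho, \alpha_i^\vee\rangle < 0$. Here I would invoke \cref{L:StraighteningInterior}: with $\lambda$ there replaced by $\mu$ and $\gamma$ unchanged, the weight $\mu' := s_{\alpha_i}\bigcdot(\gamma+\mu) - \gamma$ lies strictly between $\mu$ and $s_{\alpha_i}\mu$, hence in the interior of $\mathrm{conv}(W\lambda)$; in particular it again satisfies $w_0\lambda \le \mu' \le \lambda$, so the inductive hypothesis applies to $\mu'$. The replacement $\mu \mapsto \mu'$ corresponds, via \cref{L:dotandregularaction}, to replacing $\gamma + \mu$ by $s_{\alpha_i}\bigcdot(\gamma+\mu)$, i.e. we are building up the element $w$ one simple reflection $s_{\alpha_i}$ ($\alpha_i \in J$, so $s_{\alpha_i} \in W_J$) at a time. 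To see this process terminates I would use a standard height/norm argument: each step strictly increases the weight $\gamma + \mu$ in the dominance order (since $\langle \gamma+\mu+\rho,\alpha_i^\vee\rangle < 0$ forces $s_{\alpha_i}\bigcdot(\gamma+\mu) > \gamma+\mu$), and all the weights occurring are confined to the finite set $\gamma + \bigl(\mathrm{conv}(W\lambda)\cap\mathbb{X}\bigr)$, so there can be only finitely many steps. When the process stops we are in the case of the previous paragraph, giving $w\bigcdot(\gamma+\mu)\in\mathbb{X}^+-\rho$ with $w\in W_J$.

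The main obstacle I anticipate is the bookkeeping that shows $\langle \gamma+\mu+\rho,\alpha_i^\vee\rangle \ge 0$ for $\alpha_i \notin J$ is preserved throughout (so that only reflections in $W_J$ are ever needed), and relatedly that at every stage the current weight still lies in $\mathrm{conv}(W\lambda)$ so that the defining inequality of $J$ keeps doing its job. The key input is the elementary estimate $|\langle \nu, \alpha_i^\vee\rangle| \le \langle \lambda, \alpha_0^\vee\rangle$ for any $\nu$ with $w_0\lambda \le \nu \le \lambda$ and any simple coroot $\alpha_i^\vee$ — this holds because such $\nu$ lies in $\mathrm{conv}(W\lambda)$, every vertex $w\lambda$ of which pairs with $\alpha_i^\vee$ to give a value of absolute value at most $\langle\lambda,\alpha_0^\vee\rangle$ (as $\alpha_i^\vee$ is dominated by $\alpha_0^\vee$ in the appropriate sense), and the pairing is linear. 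Combined with $\langle \gamma+\rho,\alpha_i^\vee\rangle > \langle\lambda,\alpha_0^\vee\rangle$ for $\alpha_i \notin J$, this forces $\langle \gamma+\nu+\rho,\alpha_i^\vee\rangle > 0$, which is exactly the stability we need.
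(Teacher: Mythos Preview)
Your proposal is correct and follows essentially the same approach as the paper's proof: both show that a negative pairing $\langle\gamma+\mu+\rho,\alpha_i^\vee\rangle < 0$ forces $\alpha_i \in J$ via the bound $|\langle\mu,\alpha_i^\vee\rangle| \le \langle\lambda,\alpha_0^\vee\rangle$, then invoke \cref{L:StraighteningInterior} to replace $\mu$ by a $\mu'$ still satisfying $w_0\lambda \le \mu' \le \lambda$, and iterate within $W_J$. Your explicit termination argument (each reflection strictly increases the weight inside the finite set $\gamma + (\mathrm{conv}(W\lambda)\cap\mathbb{X})$) is a welcome addition that the paper leaves implicit.
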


\begin{proof}
For all $\alpha_i \in \Pi$, the bounding on $\mu$ implies that
$$\langle w_0\lambda, \alpha_0^{\vee} \rangle \le \langle \mu, \alpha_i^{\vee} \rangle \le \langle \lambda, \alpha_0^{\vee} \rangle.$$
Therefore, if
\begin{align*}
0 & > \langle (\gamma+\mu)+\rho,\alpha_i^{\vee} \rangle\\
 & = \langle \gamma+\rho,\alpha_i^{\vee} \rangle + \langle \mu, \alpha_i^{\vee} \rangle\\
& \ge \langle \gamma+\rho,\alpha_i^{\vee} \rangle - \langle \lambda, \alpha_0^{\vee} \rangle,
\end{align*}
then it follows that $\alpha_i \in J$.  We may replace $\gamma + \mu$ with $s_i \bigcdot (\gamma+\mu)$, which is on the positive side of the hyperplane $H_{\alpha_i,0}$.  It follows by the previous lemma, and our assumption on $\mu$, that
$$s_i \bigcdot (\gamma+\mu)=\gamma+\mu^{\prime},$$
with $w_o\lambda \le \mu^{\prime} \le \lambda$.  We may now repeat the process above with $\gamma + \mu^{\prime}$, and will eventually wind up with a weight in $\mathbb{X}^+ - \rho$ having only used reflections from $W_J$.

\end{proof}

\subsection{}

Recall that Weyl characters refer to those Euler characteristics $\chi(\lambda)$ for which $\lambda \in \mathbb{X}^+$.  The Weyl characters form a $\mathbb{Z}$-basis for $\mathbb{Z}[\mathbb{X}]^W$.  A nonzero element in $\mathbb{Z}[\mathbb{X}]^W$ having nonnegative coefficients in the Weyl basis will be called a \textit{good filtration character}.

\begin{samepage}
\begin{theorem}\label{T:MainGen}
Let $\eta \in \mathbb{Z}[\mathbb{X}]^W$, where $\eta = \sum_{\mu \in \mathbb{X}^+} c_{\mu} s(\mu)$.  
\begin{enumerate}
\item Suppose that for every $\lambda \in \mathbb{X}^+$, the product $\chi(\lambda)  \eta$ is a good filtration character.  Then $c_{\mu} \ge c_{\mu^{\prime}}$ whenever $\mu \le \mu^{\prime}$.
\\
\item Suppose that for every $\lambda \in \mathbb{X}^+$, the product $\chi((p-1)\rho+p\lambda)  \eta$ is a good filtration character.  Then $c_{\mu} \ge c_{\mu^{\prime}}$ whenever $\mu-\rho \uparrow \mu^{\prime}-\rho$.
\end{enumerate}
\end{theorem}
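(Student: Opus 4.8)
The plan is to prove both parts by the same mechanism, so I will focus on what is common and then indicate the two specializations. The core idea is that multiplying by a Weyl character $\chi(\lambda)$ (resp.\ $\chi((p-1)\rho+p\lambda)$) and demanding that the result be a good filtration character forces inequalities among the $c_\mu$, because the ``straightening'' of $\chi(\nu)$ for non-dominant $\nu-\rho$ (via $\chi(w\bigcdot\nu)=(-1)^{\ell(w)}\chi(\nu)$) produces cancellations that must be compensated by larger coefficients lower down. Concretely, I would fix $\mu^\prime\in\mathbb{X}^+$ and a single simple reflection situation: suppose $\mu=s_{\alpha,np}\bigcdot\mu^\prime$ (dot action, affine) with $\mu\le\mu^\prime$ and $\mu$ dominant, and aim to show $c_\mu\ge c_{\mu^\prime}$; the general case of the $\uparrow$ ordering then follows by chaining along a sequence as in \eqref{E:alcovestring}, and the standard partial order $\le$ in part (1) is handled similarly using ordinary reflections $s_\alpha$ rather than affine ones.

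The key computational step is to pick the right multiplier. For part (1), given $\mu\le\mu^\prime$ with $\mu^\prime-\mu$ a sum of positive roots, I would choose $\lambda\in\mathbb{X}^+$ large enough that, in the expansion $\chi(\lambda)\eta=\sum_\nu c_\nu\,\chi(\lambda)s(\nu)=\sum_{\nu}\sum_{w\in W/W_\nu}c_\nu\,\chi(\lambda+w\nu)$ via Brauer's formula, I can read off the coefficient of a carefully chosen Weyl character $\chi(\xi)$ on both sides. Choosing $\xi$ so that $\xi$ arises from $\lambda+\mu^\prime$ with coefficient $+1$ from the term $w=e$, $\nu=\mu^\prime$, but also arises (after straightening by a length-one Weyl element) from $\lambda+w\mu$-type terms with a $-1$ sign, the good filtration hypothesis $\langle\chi(\xi),\chi(\lambda)\eta\rangle\ge 0$ yields $c_\mu - c_{\mu^\prime}\ge 0$ (plus possibly further nonnegative contributions from other orbits, which only help). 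Here Lemma \ref{L:StraighteningInterior} is exactly what guarantees that the offending weight $s_\alpha\bigcdot(\lambda+\mu)-\mu$ reflects to something strictly inside $\mathrm{conv}(W\mu)$, hence does not interfere with the top of the relevant orbit. For part (2), the same scheme applies but with $\lambda$ replaced by $(p-1)\rho+p\lambda$: the affine reflection $s_{\alpha,np}$ relating $\mu$ and $\mu^\prime$ becomes, after the shift by $(p-1)\rho+p\lambda$, an ordinary finite reflection $s_\alpha$ (this is the standard ``unfolding'' of affine linkage via the Steinberg shift), so Lemma \ref{L:StraighteningInterior} again applies verbatim and Lemma \ref{L:NeededReflections} ensures we can actually find dominant representatives for the weights we need.

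I would organize the proof as follows. First, reduce to the case where $\mu$ and $\mu^\prime$ differ by a single (affine, in part (2)) reflection, using \eqref{E:alcovestring} and transitivity of $\ge$. Second, in part (1), fix such a pair and choose a dominant $\lambda$ with $\langle\lambda,\alpha_i^\vee\rangle$ sufficiently large for all $i$ (so that $\lambda+\mu$ and $\lambda+\mu^\prime$ and all intermediate weights of interest are dominant or straighten cleanly); then apply Brauer's formula to $\chi(\lambda)\eta$, isolate the coefficient of $\chi(\lambda+\mu^\prime)$, identify precisely which pairs $(\nu,w)$ contribute to it and with which signs, and conclude $c_{\mu^\prime}\le c_\mu + (\text{nonneg.})$ from the good filtration hypothesis. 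Third, for part (2), repeat with multiplier $(p-1)\rho+p\lambda$, using $\chi(w\bigcdot\xi)=(-1)^{\ell(w)}\chi(\xi)$ for $w\in W$ together with Lemma \ref{L:NeededReflections} to handle straightening of the shifted weights, and using that the $\uparrow$ relation on $\mu-\rho,\mu^\prime-\rho$ translates into exactly the reflection geometry the argument needs. The main obstacle, and the place requiring genuine care, is the bookkeeping in the second step: when extracting the coefficient of $\chi(\lambda+\mu^\prime)$ from $\sum_\nu\sum_{w}c_\nu\chi(\lambda+w\nu)$, one must verify that no orbit $W\nu$ with $\nu$ far below $\mu$ contributes a \emph{negative} sign to that particular Weyl character (only $\mu$ itself should, via a single length-one straightening), and symmetrically that $\mu^\prime$ contributes $+1$; pinning down the geometry of which $\lambda+w\nu$ land on the $W$-orbit of $\lambda+\mu^\prime$ after straightening — and showing the only ``bad'' cancellation is the desired $c_\mu$ versus $c_{\mu^\prime}$ one — is where Lemma \ref{L:StraighteningInterior} does its work and where I expect the argument to need the most precision.
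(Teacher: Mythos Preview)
Your overall strategy --- use Brauer's formula to expand $\chi(\gamma)\eta$, then extract the coefficient of a single Weyl character and invoke the good filtration hypothesis --- is the right one, and it is what the paper does. But the execution has the mechanism inverted in a way that makes the argument fail, not merely imprecise.

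First, the reduction in part~(1). The ordering $\le$ on dominant weights is \emph{not} generated by finite reflections: two dominant weights $\mu<\mu'$ need not be related by any $s_\alpha\bigcdot(-)$. The correct reduction is to a covering relation $\mu'=\mu+\alpha$ for some $\alpha\in\Phi^+$ (this is Stembridge's theorem, cited in the paper). Your proposed chain of single-reflection steps does not exist in general.

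Second, and more seriously, your description of which orbit contributes which sign is backwards. You want the coefficient of some $\chi(\xi)$ to be $c_\mu-c_{\mu'}$, but you arrange for $\mu'$ to contribute $+1$ (via $w=e$, since $\lambda+\mu'$ is dominant) and for $\mu$ to contribute $-1$ (via straightening). That yields $c_{\mu'}-c_\mu\ge 0$, the wrong inequality. The correct picture is the reverse: one needs a translate of $\mu$ to land in the dominant chamber (contributing $+c_\mu$) while the corresponding translate of $\mu'$ sits just across a single wall and straightens back with sign $-1$ (contributing $-c_{\mu'}$). Since $\mu<\mu'$, this is impossible if one only shifts by a large dominant $\lambda$ and looks at the identity coset: one must first conjugate. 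Concretely, choose $w\in W$ with $w\alpha=-\alpha_i$ for a simple $\alpha_i$; then $w\mu'=w\mu-\alpha_i$ (in part~(1), where $\mu'=\mu+\alpha$), so $w\mu'$ is \emph{smaller} than $w\mu$ in the $\alpha_i$-direction.

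Third, the choice of multiplier. If $\langle\lambda,\alpha_j^\vee\rangle$ is large for \emph{all} $j$, then every $\lambda+w\nu$ appearing in Brauer's formula is already dominant, no straightening occurs, and the coefficient of each $\chi(\lambda+w\nu)$ is simply $c_\nu$ --- no inequality is produced. The paper instead takes $\gamma=\sum m_j\varpi_j$ with $m_i$ tuned precisely (equal to $\langle\mu,\alpha^\vee\rangle$ in part~(1), and to $n-1$ in part~(2)) so that $\gamma+w\mu$ sits on the dominant side of $H_{\alpha_i,0}$ while $\gamma+w\mu'$ sits just on the other side and reflects to it; the remaining $m_j$ for $j\ne i$ are taken large so that, by Lemma~\ref{L:NeededReflections}, only the single reflection $s_i$ is ever needed and no orbit other than $W\mu$ and $W\mu'$ can contribute to $\chi(\gamma+w\mu)$. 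Lemma~\ref{L:StraighteningInterior} is then used (with roles matching its statement: the orbit element plays the role of the lemma's $\lambda$, and the multiplier plays the role of $\gamma$) to rule out interference from other orbits. Your invocation of that lemma has the two arguments swapped, which is why the conclusion you draw from it does not match what it actually says.
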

\end{samepage}

\begin{proof}
(1) It suffices to prove the result in the case $\mu^{\prime}$ is a minimal dominant weight such that $\mu^{\prime} > \mu$.  Under this assumption, \cite[Theorem 2.6]{St} shows that there is a positive root $\alpha \in \Phi^+$ such that $\mu + \alpha = \mu^{\prime}$.
Set
$$n= \langle \mu, \alpha^{\vee} \rangle.$$
We then have that $ \langle \mu^{\prime}, \alpha^{\vee} \rangle = n+2$, and since $\mu$ is dominant, that $n \ge 0$.  There is a simple root $\alpha_i$ and an element $w \in W$ such that $w\alpha=-\alpha_i$.  From this it follows that
$$w\mu^{\prime}=w(\mu+\alpha)=w\mu-\alpha_i.$$
We also have
$$\langle w\mu, \alpha_i^{\vee} \rangle= -n,$$
and
$$ \langle w\mu^{\prime}, \alpha_i^{\vee} \rangle = -(n+2).$$
Set $$\gamma = \sum m_j \varpi_j \in \mathbb{X}^+$$
where $m_i=n$, and for $j \ne i$,
$$m_j > \langle \sigma, \alpha_0^{\vee} \rangle,$$
for all weights $\sigma$ appearing in $\eta$ (note that such a choice is possible as there are only finitely many such $\sigma$).  By Brauer's formula,
\begin{equation}\label{E:BrauerSum}
\chi(\gamma)\eta =\sum_{\lambda \in \mathbb{X}^+} \sum_{\sigma \in W\lambda} c_{\lambda} \chi(\gamma+\sigma).
\end{equation}

Among the terms of this sum are $c_{\mu}\chi(\gamma+w\mu)$ and $c_{\mu^{\prime}}\chi(\gamma+w\mu^{\prime})$.  For any $\sigma$ that is a weight in $\eta$, it follows from Lemma \ref{L:NeededReflections} that the choice of $\gamma$ guarantees that either
$$\sigma+\gamma \in \mathbb{X}^+-\rho,$$
or else
$$s_i \bigcdot (\sigma + \gamma) \in \mathbb{X}^+ - \rho.$$
We see in particular that $\gamma+w\mu \in \mathbb{X}^+-\rho$, and in fact is in $\mathbb{X}^+$, and that
\begin{align*}
s_i \bigcdot (\gamma+w\mu^{\prime}) & = s_i(\gamma+w\mu^{\prime}+\rho)-\rho\\
& = \gamma+w\mu^{\prime}+\rho - \langle \gamma+w\mu^{\prime}+\rho, \alpha_i^{\vee} \rangle \alpha_i -\rho\\
& = \gamma+w\mu^{\prime}+\rho + \alpha_i -\rho\\
& = \gamma+w\mu.\\
\end{align*}
It then follows that when the sum in (\ref{E:BrauerSum}) is rewritten as a sum of Weyl characters, the coefficient on $\chi(\gamma+w\mu)$ is $c_{\mu}-c_{\mu^{\prime}}$.  By hypothesis, this coefficient is nonnegative, proving that $c_{\mu} \ge c_{\mu^{\prime}}$.

(2) This case follows a similar logic, though there are a few modifications that need to be spelled out.  First, we assume that the relation $\mu-\rho \uparrow \mu^{\prime} - \rho$ is minimal, so that there is some $\alpha \in \Phi^+$ and some $n \ge 1$ such that
$$s_{\alpha,np} \bigcdot (\mu-\rho)=\mu^{\prime}-\rho.$$
This implies that
$$\langle \mu-\rho+\rho, \alpha^{\vee} \rangle < np < \langle \mu^{\prime}-\rho+\rho, \alpha^{\vee} \rangle.$$
Equivalently,
$$\langle \mu, \alpha^{\vee} \rangle < np < \langle \mu^{\prime}, \alpha^{\vee} \rangle$$
Again there is a simple root $\alpha_i$ and an element $w \in W$ such that $w\alpha=-\alpha_i$.
We then have that
$$\langle w\mu, \alpha_i^{\vee} \rangle < -np < \langle w\mu^{\prime}, \alpha_i^{\vee} \rangle.$$
We now set $$\gamma = \sum m_j \varpi_j \in \mathbb{X}^+$$
where $m_i=n-1$, and for $j \ne i$, 
$$p(m_j+1) > \langle \sigma, \alpha_0^{\vee} \rangle.$$
The proof now follows similar concluding logic as in proof of (1).  The character
$$\chi((p-1)\rho+p\gamma)\eta$$
has by assumption nonnegative coefficients when expressed in the Weyl basis, and one can verify that $$c_{\mu}-c_{\mu^{\prime}}$$
is one of these coefficients.
\end{proof}

\begin{remark}
In the first statement of this theorem, the fact that $\chi(0)\eta$ is a good filtration character means that $\eta$ is a good filtration character.  In this case the theorem is simply giving a statement about orbit sums in Weyl characters, and this property is already known.  We put these together so that Weyl characters and Steinberg quotients can be seen as parallel in a certain sense.
\end{remark}

\subsection{}
In \cite{S1}, the quotients $t(\lambda)$ and $q(\lambda)$ were defined only for $\lambda \in \mathbb{X}_p$.  While there is not much point in extending the definition of the $q(\lambda)$ to include more weights (one could make sense of such a definition, but we effectively obtain nothing new as follows from \cite[II.11.3(2)]{rags}), extending the definition of $t(\lambda)$ turns out to be quite useful.  That is, for $\lambda \in \mathbb{X}^+$ we define (as before)

$$t(\lambda)=\textup{ch}(T((p-1)\rho+\lambda))/\chi((p-1)\rho).$$

\vspace{0.2in}

\noindent The facts recalled in Section \ref{SS:SteinbergRecall} are true of $T((p-1)\rho+\lambda)$ for all dominant $\lambda$.  Therefore we may apply Theorem \ref{T:MainGen}(2) to $t(\lambda)$, showing that the statement in Theorem \ref{T:OldMain} holds in this setting also.

By extending this definition, we can calculate precisely a number of the $t(\lambda)$.  This next result follows immediately from Donkin's tensor product theorem \cite[Proposition 2.1]{D3}.

\begin{prop}
Let $\lambda \in \mathbb{X}_p$.  If $t(\lambda)=q(\lambda)$, then
$$t(\lambda+p\mu)=t(\lambda)\textup{ch}(T(\mu))^F.$$
\end{prop}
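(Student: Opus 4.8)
The plan is to deduce the identity from Donkin's tensor product theorem for tilting modules, which states that for $\lambda \in \mathbb{X}_p$ with $T(\lambda)$ ``behaving like'' the projective cover $\widehat{Q}_1(\lambda)$ — precisely the condition $t(\lambda)=q(\lambda)$ encodes, via \cite[Proposition 2.1]{D3} — one has an isomorphism of $G$-modules
$$
T((p-1)\rho+\lambda+p\mu)\ \cong\ T((p-1)\rho+\lambda)\otimes T(\mu)^{(1)}.
$$
Granting this, I would take formal characters on both sides, obtaining
$$
\mathrm{ch}\,T((p-1)\rho+\lambda+p\mu)\ =\ \mathrm{ch}\,T((p-1)\rho+\lambda)\cdot \bigl(\mathrm{ch}\,T(\mu)\bigr)^F,
$$
since the character of a Frobenius twist $M^{(1)}$ is $(\mathrm{ch}\,M)^F$, as recorded in Section 2.2. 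Dividing both sides by the Steinberg character $\chi((p-1)\rho)$ — legitimate because $\mathbb{Z}[\mathbb{X}]$ is an integral domain and both sides are genuinely divisible by $\chi((p-1)\rho)$, the left by definition of $t(\lambda+p\mu)$ and the right because $t(\lambda)\cdot(\mathrm{ch}\,T(\mu))^F$ visibly is — yields exactly
$$
t(\lambda+p\mu)\ =\ t(\lambda)\,(\mathrm{ch}\,T(\mu))^F.
$$

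The one point that needs care is verifying that the hypothesis $t(\lambda)=q(\lambda)$ is exactly what is needed to invoke Donkin's tensor product theorem in the stated form. The theorem in \cite{D3} gives the decomposition $T((p-1)\rho+\lambda+p\mu)\cong T((p-1)\rho+\lambda)\otimes T(\mu)^{(1)}$ under the condition that $(p-1)\rho+\lambda$ is the highest weight of an indecomposable tilting module whose restriction to $G_1T$ remains indecomposable, equivalently $T((p-1)\rho+\lambda)\cong\widehat{Q}_1((p-1)\rho+\lambda)$ as $G_1T$-modules (after the appropriate identification involving $w_0\lambda$). By the definitions recalled in Section \ref{SS:SteinbergRecall}, $t(\lambda)$ and $q(\lambda)$ are the characters of $T((p-1)\rho+\lambda)$ and $\widehat{Q}_1((p-1)\rho+w_0\lambda)$ respectively, each divided by $\chi((p-1)\rho)$; since $T((p-1)\rho+\lambda)$ always contains $\widehat{Q}_1((p-1)\rho+w_0\lambda)$ as a $G_1T$-summand, equality of characters $t(\lambda)=q(\lambda)$ forces them to coincide, which is precisely Donkin's hypothesis.

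The main obstacle, such as it is, is purely bookkeeping: matching the indexing conventions (the appearance of $w_0$ in the definition of $q(\lambda)$, and the precise statement of \cite[Proposition 2.1]{D3}) so that the equivalence between ``$t(\lambda)=q(\lambda)$'' and ``$T((p-1)\rho+\lambda)$ is $G_1T$-indecomposable'' is airtight. Once that identification is in place, the rest is the one-line character computation above, and the divisibility claims are immediate from working in the integral domain $\mathbb{Z}[\mathbb{X}]$.
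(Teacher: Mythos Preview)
Your proposal is correct and follows exactly the approach the paper indicates: the paper simply asserts that the result ``follows immediately from Donkin's tensor product theorem \cite[Proposition 2.1]{D3},'' and you have spelled out precisely that immediate deduction, including the translation of the hypothesis $t(\lambda)=q(\lambda)$ into the $G_1T$-indecomposability condition required by Donkin's theorem. The bookkeeping you flag (the $w_0$ shift in the definition of $q(\lambda)$ and the fact that $\widehat{Q}_1((p-1)\rho+w_0\lambda)$ is always a $G_1T$-summand of $T((p-1)\rho+\lambda)$) is indeed the only thing to check, and your treatment of it is sound.
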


\subsection{}

The quantum Steinberg module is $\St_{\zeta} = L_{\zeta}((p-1)\rho)$.  Let $\mu \in X_+$.  The character equality
$$\chi((p-1)\rho+p\mu)=\chi((p-1)\rho)\chi(\mu)^F$$
reflects the module isomorphism
$$\nabla_{\zeta}((p-1)\rho+p\mu) \cong \St_{\zeta} \otimes L_{\mathbb{C}}(\mu)^F.$$
The module $\nabla_{\zeta}((p-1)\rho+p\mu)$ is simultaneously simple, standard, costandard, and tilting.  For each $\lambda \in \mathbb{X}^+$, the tilting module $T_{\zeta}((p-1)\rho+\lambda)$ is an injective and projective $\U{\zeta}$-module, and every an indecomposable injective $\U{\zeta}$-module is of this form.  Writing $\lambda = \lambda_0 + p\lambda_1$, with $\lambda_0 \in \mathbb{X}_p$ and $\lambda_1 \in X_+$, we have
$$T_{\zeta}((p-1)\rho+\lambda) \cong T_{\zeta}((p-1)\rho+\lambda_0) \otimes L_{\mathbb{C}}(\lambda_1)^F.$$

\vspace{.2in}

Let $\lambda \in \mathbb{X}^+$.  We define the quantum Steinberg quotient $t_{\zeta}(\lambda)$ by
$$t_{\zeta}(\lambda) = \text{ch}(T_{\zeta}((p-1)\rho+\lambda))/\chi((p-1)\rho).$$
Since $\text{ch}(T_{\zeta}((p-1)\rho+\lambda))$ is $W$-invariant, there are non-negative integers $c_{\mu,\lambda}$ such that
$$t_{\zeta}(\lambda) = \sum c_{\mu,\lambda} s(\mu).$$

\begin{theorem}
The statement of Theorem \ref{T:OldMain} for the coefficients $b_{\mu,\lambda}$ holds also for the coefficients $c_{\mu,\lambda}$.
\end{theorem}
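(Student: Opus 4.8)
The strategy is to invoke Theorem \ref{T:MainGen}(2) with $\eta = t_\zeta(\lambda)$, exactly as the algebraic case invoked it with $\eta = t(\lambda)$. By definition $t_\zeta(\lambda) = \mathrm{ch}(T_\zeta((p-1)\rho+\lambda))/\chi((p-1)\rho)$ is a well-defined element of $\mathbb{Z}[\mathbb{X}]^W$, and $t_\zeta(\lambda) = \sum c_{\mu,\lambda} s(\mu)$. So it suffices to check the hypothesis of Theorem \ref{T:MainGen}(2): that for every $\mu \in \mathbb{X}^+$, the product $\chi((p-1)\rho + p\mu)\, t_\zeta(\lambda)$ is a good filtration character, i.e.\ has nonnegative coefficients in the Weyl basis. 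Once that is established, the conclusion $c_{\mu,\lambda} \ge c_{\mu',\lambda}$ whenever $\mu - \rho \uparrow \mu' - \rho$ is immediate, which is precisely the statement of Theorem \ref{T:OldMain} for the $c_{\mu,\lambda}$.

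**Reducing to a tilting-module statement.** Multiplying through by $\chi((p-1)\rho)$, the product equals
$$\chi((p-1)\rho + p\mu)\,\mathrm{ch}(T_\zeta((p-1)\rho+\lambda))\big/\chi((p-1)\rho).$$
Using the character identity $\chi((p-1)\rho + p\mu) = \chi((p-1)\rho)\chi(\mu)^F$ recorded just above the theorem, together with the cancellation property in the integral domain $\mathbb{Z}[\mathbb{X}]$, this simplifies to $\chi(\mu)^F \cdot \mathrm{ch}(T_\zeta((p-1)\rho+\lambda))$. So the claim reduces to: $\mathrm{ch}(T_\zeta((p-1)\rho+\lambda)) \cdot \chi(\mu)^F$ is a good filtration character. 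Now $\chi(\mu)^F = \mathrm{ch}(L_{\mathbb{C}}(\mu)^F) = \mathrm{ch}(\nabla_\zeta(p\mu))$ wait — more carefully, $\chi(\mu)^F$ is the character of the Frobenius-twisted module, and in the quantum setting $\nabla_\zeta(p\mu) \cong \nabla_{\mathbb C}(\mu)^F = L_{\mathbb C}(\mu)^F$ is simultaneously tilting; so $\chi(\mu)^F = \mathrm{ch}(T_\zeta(p\mu))$. Therefore the product in question is the character of $T_\zeta((p-1)\rho+\lambda) \otimes T_\zeta(p\mu)$, which is a tensor product of tilting $\U{\zeta}$-modules and hence tilting; in particular it has a good (costandard) filtration, so its character is a nonnegative combination of the $\chi(\nu)$'s. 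This is exactly the good-filtration-character property we needed.

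**Assembling the argument.** The write-up therefore runs: (i) recall $t_\zeta(\lambda) \in \mathbb{Z}[\mathbb{X}]^W$ with $t_\zeta(\lambda) = \sum c_{\mu,\lambda}s(\mu)$; (ii) for arbitrary $\mu \in \mathbb{X}^+$, compute $\chi((p-1)\rho+p\mu)\,t_\zeta(\lambda) = \chi(\mu)^F \cdot \mathrm{ch}(T_\zeta((p-1)\rho+\lambda)) = \mathrm{ch}\big(T_\zeta(p\mu) \otimes T_\zeta((p-1)\rho+\lambda)\big)$, using the cited character identity and cancellation; (iii) note $T_\zeta(p\mu) \otimes T_\zeta((p-1)\rho+\lambda)$ is tilting (tensor product of tiltings), hence has a good filtration, hence its character is a good filtration character; (iv) apply Theorem \ref{T:MainGen}(2) to conclude.

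**Main obstacle.** There is no serious obstacle — this is a direct transcription of the algebraic case into the quantum setting. The only points requiring a word of justification are the facts being imported from the quantum theory: that tensor products of tilting $\U{\zeta}$-modules are tilting (this is standard, e.g.\ because $\nabla_\zeta$ and $\Delta_\zeta$ are closed under tensor product in the type \textbf{1} category), and that $\nabla_\zeta(p\mu) = L_{\mathbb C}(\mu)^F$ is itself tilting with character $\chi(\mu)^F$ (recorded in the excerpt). Care should be taken that the cancellation step is legitimate, which it is since $\mathbb{Z}[\mathbb{X}]$ is an integral domain and $\chi((p-1)\rho) \ne 0$. One could equally bypass (ii)–(iii) by directly noting $T_\zeta((p-1)\rho+p\mu) \cong T_\zeta((p-1)\rho) \otimes L_{\mathbb C}(\mu)^F$ is a summand of $\St_\zeta \otimes T_\zeta(\lambda) \otimes L_{\mathbb C}(\mu)^F$ in analogy with the algebraic argument, but the cancellation route above is shortest.
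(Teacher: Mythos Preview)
Your approach matches the paper's: apply Theorem~\ref{T:MainGen}(2) after verifying that $\chi((p-1)\rho+p\mu)\,t_\zeta(\lambda)$ is a good filtration character for every $\mu\in\mathbb{X}^+$. However, your main justification of this hypothesis contains a genuine error. You assert that $\nabla_\zeta(p\mu)\cong L_{\mathbb{C}}(\mu)^F$ and hence that $L_{\mathbb{C}}(\mu)^F=T_\zeta(p\mu)$ is tilting; this is false for $\mu\neq 0$, since $\mathrm{ch}(\nabla_\zeta(p\mu))=\chi(p\mu)$ while $\mathrm{ch}(L_{\mathbb{C}}(\mu)^F)=\chi(\mu)^F$, and these differ already in rank one (for $\mu=\varpi_1$ the dimensions are $p+1$ and $2$ respectively). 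The paper's statement that a costandard module is ``simultaneously simple, standard, costandard, and tilting'' refers to $\nabla_\zeta((p-1)\rho+p\mu)$, not to $\nabla_\zeta(p\mu)$. Consequently your step ``tensor product of tiltings, hence tilting'' does not go through as written.

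The repair is precisely the alternative you sketch in your final paragraph, and it is the direct quantum analogue of the argument the paper records in Section~\ref{SS:SteinbergRecall} for the algebraic case. Since $T_\zeta((p-1)\rho+\lambda)$ is a summand of $\St_\zeta\otimes T_\zeta(\lambda)$, the module $T_\zeta((p-1)\rho+\lambda)\otimes L_{\mathbb{C}}(\mu)^F$ is a summand of
\[
\St_\zeta\otimes L_{\mathbb{C}}(\mu)^F\otimes T_\zeta(\lambda)\;\cong\;\nabla_\zeta((p-1)\rho+p\mu)\otimes T_\zeta(\lambda),
\]
and the right-hand side \emph{is} a tensor product of tilting modules (here using that $\nabla_\zeta((p-1)\rho+p\mu)$ is tilting), hence tilting; summands of tilting modules are tilting. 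Equivalently, one may use the formula $T_\zeta((p-1)\rho+\lambda)\cong T_\zeta((p-1)\rho+\lambda_0)\otimes L_{\mathbb{C}}(\lambda_1)^F$ displayed just before the theorem and decompose $L_{\mathbb{C}}(\lambda_1)\otimes L_{\mathbb{C}}(\mu)$ into irreducibles to exhibit the tensor product directly as a sum of indecomposable tilting modules. Either route completes your argument and agrees with the paper's (implicit) proof.
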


Again, we may apply Theorem \ref{T:MainGen}(2) to see that the statement of Theorem \ref{T:OldMain} for the coefficients $b_{\mu,\lambda}$ holds also for the $c_{\mu,\lambda}$.

\begin{prop}\label{P:quantumtiltingtensor}
For each $\lambda \in \mathbb{X}_p$ and $\mu \in \mathbb{X}^+$ we have
$$t_{\zeta}(\lambda+p\mu)=t_{\zeta}(\lambda)\chi(\mu)^F.$$
\end{prop}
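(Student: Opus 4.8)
The plan is to reduce the identity to the factorization of the tilting module $T_\zeta((p-1)\rho+\lambda+p\mu)$ recorded just above the statement, namely $T_\zeta((p-1)\rho+\lambda+p\mu)\cong T_\zeta((p-1)\rho+\lambda)\otimes L_{\mathbb C}(\mu)^F$, valid because $\lambda\in\mathbb X_p$ forces $\lambda_0=\lambda$ and $\lambda_1=\mu$ in the notation used there. Taking characters of both sides gives $\operatorname{ch}(T_\zeta((p-1)\rho+\lambda+p\mu))=\operatorname{ch}(T_\zeta((p-1)\rho+\lambda))\cdot\operatorname{ch}(L_{\mathbb C}(\mu)^F)$. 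The character of $L_{\mathbb C}(\mu)$ is the ordinary irreducible character of the complex semisimple Lie algebra $\mathfrak g_{\mathbb C}$, which by Weyl's theorem equals $\chi(\mu)$; hence $\operatorname{ch}(L_{\mathbb C}(\mu)^F)=\chi(\mu)^F$ by the definition of the Frobenius twist on characters (the map $\sigma\mapsto\sigma^F$ recalled in Section 2.2).

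With that in hand, the computation is essentially formal: divide both sides by $\chi((p-1)\rho)$. Explicitly,
\begin{align*}
t_\zeta(\lambda+p\mu)
&= \operatorname{ch}(T_\zeta((p-1)\rho+\lambda+p\mu))/\chi((p-1)\rho)\\
&= \bigl(\operatorname{ch}(T_\zeta((p-1)\rho+\lambda))\cdot\chi(\mu)^F\bigr)/\chi((p-1)\rho)\\
&= \bigl(\operatorname{ch}(T_\zeta((p-1)\rho+\lambda))/\chi((p-1)\rho)\bigr)\cdot\chi(\mu)^F\\
&= t_\zeta(\lambda)\,\chi(\mu)^F,
\end{align*}
where the rearrangement in the third line is legitimate because $\mathbb Z[\mathbb X]$ is an integral domain (noted in Section 2.2) and $\chi((p-1)\rho)$ divides $\operatorname{ch}(T_\zeta((p-1)\rho+\lambda))$ by the very definition of $t_\zeta(\lambda)$, so one may cancel $\chi((p-1)\rho)$ from numerator and the factor it divides.

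I do not anticipate a genuine obstacle here; the only point requiring a word of care is the justification that $\operatorname{ch}(L_{\mathbb C}(\mu))=\chi(\mu)$ as elements of $\mathbb Z[\mathbb X]^W$ — that is, that the ``Weyl character'' $\chi(\mu)$ defined via Euler characteristics of line bundle cohomology coincides with the classical Weyl character of the highest-weight module $L_{\mathbb C}(\mu)$. This is standard (Weyl character formula, or Kempf vanishing plus Borel–Weil–Bott in characteristic zero), and it is already implicitly used in the excerpt when the isomorphism $\nabla_\zeta((p-1)\rho+p\mu)\cong \St_\zeta\otimes L_{\mathbb C}(\mu)^F$ is matched with the character identity $\chi((p-1)\rho+p\mu)=\chi((p-1)\rho)\chi(\mu)^F$. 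So the proof is little more than assembling the quoted tensor-product decomposition of tilting modules with the cancellation property in $\mathbb Z[\mathbb X]$.
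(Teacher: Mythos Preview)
Your proposal is correct and matches the paper's approach: the paper states the tilting-module factorization $T_{\zeta}((p-1)\rho+\lambda)\cong T_{\zeta}((p-1)\rho+\lambda_0)\otimes L_{\mathbb C}(\lambda_1)^F$ immediately before the proposition and then records the proposition without further proof, leaving the reader to take characters and divide by $\chi((p-1)\rho)$ exactly as you do. Your write-up simply makes explicit the one-line deduction the paper leaves implicit.
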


\section{Comparison between algebraic and quantum Steinberg quotients}

\subsection{}
We will primarily follow the $p$-modular setup of \cite{PS}, but also refer the interested reader to \cite{Lin}, where some of the modules below were first studied.  Recall that $A=\mathbb{Z}[v,v^{-1}]$.  As above, $\zeta$ denotes a fixed complex primitive $p$-th root of unity.  Let $\mathcal{O}$ denote the local ring $\mathbb{Z}[\zeta]_{(1-\zeta)}$.  The assignment $v \mapsto \zeta$ defines a ring homomorphism $A \rightarrow \mathcal{O}$.  Set
$$\U{\mathcal{O}} = \U{A} \otimes_A \mathcal{O}.$$

Then $\U{\mathcal{O}}$ is also a kind of integral form for $\U{\zeta}$.  Since $\mathcal{O}$ has residue field $\mathbb{F}_p$, we have $\U{\Bbbk} \cong \U{\mathcal{O}} \otimes_{\mathcal{O}} \Bbbk$.  There is a surjective map of algebras $\phi_{\Bbbk}: \U{\Bbbk} \twoheadrightarrow \Dist{G}$.  The elements in the kernel of $\phi_{\Bbbk}$ act as $0$ on any finite dimensional type \textbf{1} module for $\U{\Bbbk}$.  Such a module is therefore a finite dimensional $\Dist{G}$-module, and by \cite[II.1.20]{rags}, is also a rational $G$-module. 

A $\U{\mathcal{O}}$-module $M_{\mathcal{O}}$ will be called a $\U{\mathcal{O}}$-lattice if it is free of finite rank as an $\mathcal{O}$-module.  One obtains a $\U{\zeta}$-module
$$M_{\zeta} = M_{\mathcal{O}} \otimes_{\mathcal{O}} \mathbb{C}.$$
By the discussion above, if $M_{\mathcal{O}}$ is a type \textbf{1} module, then one obtains a $\Dist{G}$-module
$$M = M_{\mathcal{O}} \otimes_{\mathcal{O}} \Bbbk.$$

Let $\lambda \in \mathbb{X}^+$.  Let $V$ be a finite-dimensional $\U{\zeta}$-module of highest weight $\lambda$ that is generated by a weight vector $v_{\lambda}$.  Such a module will be a quotient of $\Delta_{\zeta}(\lambda)$, and the particular case of interest for us is when $V$ is $L_{\zeta}(\lambda)$.  One can always find a particular $\U{\mathcal{O}}$-lattice inside of $L_{\zeta}(\lambda)$ by taking the submodule $\U{\mathcal{O}}.v_{\lambda}$.  Such a construction is referred to as a minimal lattice, as it is necessarily contained in any other $\U{\mathcal{O}}$-lattice of $L_{\zeta}(\lambda)$.  By duality there also exists a maximal $\U{\mathcal{O}}$-lattice inside of $L_{\zeta}(\lambda)$.  The resulting $G$-modules obtained from the minimal and maximal lattices are denoted as
$$\Delta^{\textup{red}}(\lambda) \quad \text{and} \quad  \nabla^{\textup{red}}(\lambda)$$
respectively.  These are indecomposable modules for $G$, and both have formal characters equal to that of $L_{\zeta}(\lambda)$.  The symbols denoting each module point to their similarities with the standard and costandard $G$-modules of highest weight $\lambda$ (each of which can be constructed by minimal and maximal lattices respectively of finite dimensional $\mathfrak{g}_{\mathbb{C}}$-modules).  Specifically, we can place these modules in a chain of homomorphisms
$$\Delta(\lambda) \twoheadrightarrow \Delta^{\textup{red}}(\lambda) \twoheadrightarrow L(\lambda)$$
and
$$L(\lambda) \hookrightarrow \nabla^{\textup{red}}(\lambda) \hookrightarrow \nabla(\lambda).$$

The modules $L(\lambda)$ and $\Delta^{\textup{red}}(\lambda)$ have the same character if and only if
$$\Delta^{\textup{red}}(\lambda) \cong \nabla^{\textup{red}}(\lambda).$$

Another way to obtain a $\U{\mathcal{O}}$-lattice is to start with an indecomposable tilting module $T(\lambda)$ for $G$.  Andersen showed \cite[\S 4.2]{And2} that this tilting module can be lifted to an indecomposable tilting module $T_{\mathcal{O}}(\lambda)$ over $\Dist{G_{\mathcal{O}}}$ \cite[II.E.20]{rags}.  This pulls back to a type \textbf{1} tilting module for $\U{\mathcal{O}}$.  In this way, one obtains a tilting $\U{\zeta}$-module
$$T_{\mathcal{O}}(\lambda) \otimes_{\mathcal{O}} \mathbb{C}.$$
This module has the same character as $T(\lambda)$.  There are non-negative integers $n_{\lambda,\mu}$ such that
\begin{equation}\label{E:AlgToQuanTilt}
T_{\mathcal{O}}(\lambda) \otimes_{\mathcal{O}} \mathbb{C} \cong T_{\zeta}(\lambda) \oplus \bigoplus_{\mu < \lambda} T_{\zeta}(\mu)^{\oplus n_{\lambda,\mu}}.
\end{equation}

\subsection{}

The characters of finite dimensional $G$-modules and the characters of finite dimensional type \textbf{1} $\U{\zeta}$-modules are elements in the ring $\mathbb{Z}[X]^W$.  Define the left action of the commutative ring $\mathbb{Z}[X]^W$ on itself by
$$\eta.\sigma = \sigma \eta^F,$$
for all $\eta,\sigma \in \mathbb{Z}[X]^W$.  This action makes $\mathbb{Z}[X]^W$ into a free left $\mathbb{Z}[X]^W$-module, and following Donkin \cite{D2} we call a basis for this action a ``$p$-basis for $\mathbb{Z}[X]^W$."  One $p$-basis is given by the set of orbit sums
$$\{s(\lambda) \mid \lambda \in \mathbb{X}_p \}.$$
More generally we obtain a $p$-basis from any collection of elements
$$\{f(\lambda) \mid \lambda \in \mathbb{X}_p \},$$
where each $f(\lambda)$ is of the form
\begin{equation}\label{E:triangularity}
f(\lambda) = s(\lambda) + \sum_{\mu \in \mathbb{X}_p, \mu <_{\mathbb{Q}} \lambda} { s(\mu) g_{\mu}}^F, \quad g_{\mu} \in \mathbb{Z}[X]^W.
\end{equation}
In particular, the collections
$$\{t(\lambda)\}_{\lambda \in \mathbb{X}_p}, \quad \{q(\lambda)\}_{\lambda \in \mathbb{X}_p}, \quad \{t_{\zeta}(\lambda)\}_{\lambda \in \mathbb{X}_p},$$
each have this form and therefore are all $p$-bases.

\begin{lemma}\label{L:equaltop}
Let $\{\eta_{\lambda}\}_{\lambda \in \mathbb{X}_p}$ and $\{\sigma_{\lambda}\}_{\lambda \in \mathbb{X}_p}$ be collections of elements in $\mathbb{Z}[X]^W$.  If
$$\sum_{\lambda \in \mathbb{X}_p} q(\lambda)\eta_{\lambda}^F = \sum_{\lambda \in \mathbb{X}_p} t_{\zeta}(\lambda)\sigma_{\lambda}^F,$$
then $\eta_{(p-1)\rho}=\sigma_{(p-1)\rho}$.
\end{lemma}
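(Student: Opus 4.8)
The plan is to exploit the fact that both $\{q(\lambda)\}_{\lambda \in \mathbb{X}_p}$ and $\{t_{\zeta}(\lambda)\}_{\lambda \in \mathbb{X}_p}$ are $p$-bases for $\mathbb{Z}[\mathbb{X}]^W$ (as recorded just above the statement), and to pin down the coefficient attached to the Steinberg weight $(p-1)\rho$ by comparing leading terms under the $\leq_{\mathbb{Q}}$ (rational dominance) ordering. Concretely, I would first re-expand everything in the orbit-sum $p$-basis $\{s(\nu)\}_{\nu \in \mathbb{X}_p}$: writing $q(\lambda) = \sum_{\nu \leq_{\mathbb{Q}} \lambda, \, \nu \in \mathbb{X}_p} s(\nu) (a^{(\lambda)}_{\nu})^F$ and $t_{\zeta}(\lambda) = \sum_{\nu \leq_{\mathbb{Q}} \lambda, \, \nu \in \mathbb{X}_p} s(\nu) (c^{(\lambda)}_{\nu})^F$, where by the triangularity in \eqref{E:triangularity} the top coefficient (at $\nu = \lambda$) is $1$ in each case. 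Substituting these into the hypothesised identity and collecting the coefficient of $s((p-1)\rho)$ — using that $(p-1)\rho$ is the unique maximal element of $\mathbb{X}_p$ under $\leq_{\mathbb{Q}}$ — gives, after applying the Frobenius untwist (which is injective since $\mathbb{Z}[\mathbb{X}]$ is a domain and $\sigma \mapsto \sigma^F$ is an injective ring map):
\[
\eta_{(p-1)\rho} + \sum_{\lambda \in \mathbb{X}_p,\, \lambda \neq (p-1)\rho} a^{((p-1)\rho)}_{\lambda}\,\eta_{\lambda}' \;=\; \sigma_{(p-1)\rho} + \sum_{\lambda \in \mathbb{X}_p,\, \lambda \neq (p-1)\rho} c^{((p-1)\rho)}_{\lambda}\,\sigma_{\lambda}',
\]
where the primed terms are themselves $F$-untwists of the contributions of $q(\lambda), t_{\zeta}(\lambda)$ to the coefficient of $s((p-1)\rho)$. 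Wait — this is not quite clean, so the cleaner route is to invoke $p$-basis uniqueness directly rather than chase orbit-sum coefficients.

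So the streamlined argument is: since $\{q(\lambda)\}_{\lambda \in \mathbb{X}_p}$ is a $p$-basis, the element $\Theta := \sum_{\lambda} q(\lambda)\eta_{\lambda}^F$ has \emph{unique} coefficients $\eta_{\lambda}$ in this basis; likewise $\{t_{\zeta}(\lambda)\}_{\lambda}$ is a $p$-basis with unique coefficients $\sigma_{\lambda}$. The two bases are related by a unipotent-triangular (with respect to $\leq_{\mathbb{Q}}$ on $\mathbb{X}_p$) change-of-basis matrix over $\mathbb{Z}[\mathbb{X}]^W$ acting via the $F$-twisted module structure: writing $q(\lambda) = \sum_{\mu \leq_{\mathbb{Q}} \lambda} t_{\zeta}(\mu) (m_{\mu,\lambda})^F$ for suitable $m_{\mu,\lambda} \in \mathbb{Z}[\mathbb{X}]^W$ with $m_{\lambda,\lambda} = 1$ — this expansion exists precisely because both families satisfy \eqref{E:triangularity}. (Indeed $(m_{\mu,\lambda})$ is the inverse of a unipotent-triangular matrix, hence itself unipotent-triangular; its existence requires no positivity, just invertibility over the ring.) Substituting and equating $t_{\zeta}$-coefficients yields $\sigma_{\mu} = \sum_{\lambda \geq_{\mathbb{Q}} \mu} m_{\mu,\lambda}\,\eta_{\lambda}$ (dropping the $F$'s after untwisting). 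Taking $\mu = (p-1)\rho$, which is maximal in $\mathbb{X}_p$, the sum collapses to the single term $\lambda = (p-1)\rho$ with $m_{(p-1)\rho,(p-1)\rho} = 1$, giving $\sigma_{(p-1)\rho} = \eta_{(p-1)\rho}$.

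The one genuine thing to verify carefully is the existence of the triangular expansion $q(\lambda) = \sum_{\mu \leq_{\mathbb{Q}} \mu \leq \lambda} t_{\zeta}(\mu)(m_{\mu,\lambda})^F$ with $m_{\mu,\lambda} \in \mathbb{Z}[\mathbb{X}]^W$, i.e.\ that the change-of-basis between these two specific $p$-bases is itself of the form \eqref{E:triangularity}. This follows by the standard triangular-matrix-inversion argument: arrange $\mathbb{X}_p$ into a linear order refining $\leq_{\mathbb{Q}}$, form the $\mathbb{X}_p \times \mathbb{X}_p$ matrices over $\mathbb{Z}[\mathbb{X}]^W$ recording the expansions of $q(\lambda)$ and of $t_{\zeta}(\lambda)$ in the orbit-sum $p$-basis $\{s(\nu)\}$; by \eqref{E:triangularity} both are upper-unitriangular, hence invertible, and the product of one with the inverse of the other is again upper-unitriangular — and this product, read off column by column, is exactly $(m_{\mu,\lambda})^F$. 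The main (and only real) obstacle is therefore bookkeeping: making sure the $F$-twisted module action is used consistently (so that products $q(\lambda)\cdot \eta^F$ are interpreted as $\eta.q(\lambda)$ in the notation of Section 4.2) and that the $F$-untwisting is legitimate, which it is because $\sigma \mapsto \sigma^F$ is an injective ring homomorphism on the integral domain $\mathbb{Z}[\mathbb{X}]$. No positivity of coefficients is needed anywhere, only the $p$-basis (triangularity + invertibility) structure and maximality of $(p-1)\rho$ in $\mathbb{X}_p$.
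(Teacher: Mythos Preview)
Your streamlined argument is correct and is essentially the same as the paper's: both re-express the $q$-basis expansion in the $t_{\zeta}$-basis via the unipotent-triangular change of basis guaranteed by \eqref{E:triangularity}, then use maximality of $(p-1)\rho$ in $\mathbb{X}_p$ under $\leq_{\mathbb{Q}}$ to isolate the top coefficient. The paper's version is terser (it omits the explicit matrix-inversion justification and the false start in your first paragraph), but the mathematical content is identical.
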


\begin{proof}
We can take the sum
$$\sum_{\lambda \in \mathbb{X}_p} q(\lambda)\eta_{\lambda}^F$$
and express it in the $\{t_{\zeta}(\lambda)\}$-basis.  By Equation (\ref{E:triangularity}), and the fact that $(p-1)\rho$ is maximal among weights in $\mathbb{X}_p$ under the ordering $\le_{\mathbb{Q}}$, it follows that $\eta_{(p-1)\rho}^F$ will also be the coefficient of $t_{\zeta}((p-1)\rho)$ in the second expression.  The result now follows.
\end{proof}

\subsection{}
When the indecomposable projective $G_1$-modules lift to tilting modules for $G$, we have $q(\lambda)=t(\lambda)$, and when Lusztig's conjecture regarding the characters of the simple $G$-modules holds, we have $q(\lambda)=t_{\zeta}(\lambda)$.  So in very large characteristic, all three are equal.  In this subsection we will give general relationships between the quantum and algebraic quotients that hold in all situations.

First we establish a few lemmas.

\begin{lemma}
Let $\lambda \in \mathbb{X}_p$, $M$ in $G$-mod.  Then
$$\textup{ch}(\Hom_{G_1}(\widehat{Q}_1(\lambda),M))=\eta^F,$$
where $\eta^F$ is the coefficient of $q((p-1)\rho)$ when expressing $q((p-1)\rho-\lambda)\text{ch}(M)$ in the $\{q(\mu)\}$-basis.
\end{lemma}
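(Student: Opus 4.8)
The plan is to unwind the definition of the Steinberg quotient $q$ and use the projective--injective duality of the modules $\widehat{Q}_1(\lambda)$ together with the tensor identity. First I would recall that $\widehat{Q}_1(\lambda)$ is the projective cover of $\widehat{L}_1(\lambda)$ in the category of $G_1T$-modules, and that on restriction to $G_1$ (or equivalently after summing over the relevant $T$-twists) the dimension of $\Hom_{G_1}(\widehat{Q}_1(\lambda),M)$ records the multiplicity of the simple $G_1$-module $L_1(\lambda)$ as a composition factor of $M$ viewed as a $G_1$-module, weighted by the $T$-action. The first key step is therefore to express $\textup{ch}\,\Hom_{G_1}(\widehat{Q}_1(\lambda),M)$ as a $G$-character (a $G/G_1$-character, i.e. an element of $\mathbb{Z}[\mathbb{X}]^W$ after Frobenius untwisting) — this is where the Andersen-type reciprocity $[\widehat{Q}_1(\mu):\widehat{Z}_1(\nu)] = [\nabla(\nu):L_1(\mu)]$ and the self-duality of the $\widehat{Q}_1$ enter.

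Next I would bring in the Steinberg tensor identity at the level of $G_1$: since $\widehat{Q}_1((p-1)\rho-\lambda)$ is, up to a Frobenius twist by a line, $\St \otimes \widehat{Q}_1^{\,\prime}$-type data, tensoring by $\St$ and using that $\St \otimes (-)$ carries projectives of $G_1$ to tilting-type modules, I can relate $\Hom_{G_1}(\widehat{Q}_1(\lambda), M)$ to $\Hom_{G_1}(\St, \widehat{Q}_1((p-1)\rho-\lambda)^* \otimes M)$ and then to the multiplicity of the Steinberg $G_1$-summand in $\widehat{Q}_1((p-1)\rho-\lambda) \otimes M$. Dividing through by $\chi((p-1)\rho)$ — which is legitimate since $\mathbb{Z}[\mathbb{X}]$ is a domain — converts $\textup{ch}(\widehat{Q}_1((p-1)\rho-\lambda)\otimes M)$ into $q((p-1)\rho-\lambda)\,\textup{ch}(M)$, and the Steinberg-summand multiplicity becomes precisely the coefficient of $q((p-1)\rho)$ when this product is written in the $p$-basis $\{q(\mu)\}_{\mu\in\mathbb{X}_p}$. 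Here I would invoke that $\{q(\mu)\}$ is a $p$-basis (established just above via \eqref{E:triangularity}) so that ``the coefficient of $q((p-1)\rho)$'' is well defined as an element $\eta^F$ of $\mathbb{Z}[\mathbb{X}]^W$, and that extracting it corresponds on the module side to projecting onto the $G_1$-Steinberg isotypic part.

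I expect the main obstacle to be the careful bookkeeping of the $T$-action and Frobenius twists: the statement is really about $G_1T$-modules and $G/G_1 \cong G$-characters, so I need to track which $\Hom$-space carries which residual $T$-weight, make sure the Frobenius untwisting is applied on the correct side, and confirm that the ``coefficient of $q((p-1)\rho)$'' genuinely equals $\textup{ch}$ of the $G/G_1$-module $\Hom_{G_1}(\St, \widehat{Q}_1((p-1)\rho-\lambda)\otimes M)$ rather than differing by a twist or a dual. A clean way to handle this is to first prove the identity for $M = \nabla(\nu)$ (where $\widehat{Q}_1$--$\widehat{Z}_1$ reciprocity gives both sides explicitly in terms of $\uparrow$-combinatorics), then extend to arbitrary $M$ by additivity in short exact sequences — both sides are additive, the left because $\widehat{Q}_1(\lambda)$ is projective over $G_1$ and the right because multiplication in $\mathbb{Z}[\mathbb{X}]^W$ and coefficient-extraction in a fixed basis are additive — and every $M$ in $G$-mod has a filtration whose character lies in the span of the $\chi(\nu)$.
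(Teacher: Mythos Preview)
Your high-level strategy --- tensor--Hom adjunction, then decompose a projective $G_1T$-module and read off a coefficient by passing to Steinberg quotients --- is exactly what the paper does. However, you have tangled the indexing and, as a result, overcomplicated the argument. The concrete slip is in the definition of $q$: by the paper's convention $q(\mu)=\textup{ch}\,\widehat{Q}_1((p-1)\rho+w_0\mu)/\chi((p-1)\rho)$, \emph{not} $\textup{ch}\,\widehat{Q}_1(\mu)/\chi((p-1)\rho)$. Thus
\[
q((p-1)\rho-\lambda)=\frac{\textup{ch}\,\widehat{Q}_1(-w_0\lambda)}{\chi((p-1)\rho)}=\frac{\textup{ch}\,\widehat{Q}_1(\lambda)^*}{\chi((p-1)\rho)},
\]
and $q((p-1)\rho)$ corresponds to $\widehat{Q}_1(0)$, not to $\St$. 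So your asserted equality $\textup{ch}\bigl(\widehat{Q}_1((p-1)\rho-\lambda)\otimes M\bigr)/\chi((p-1)\rho)=q((p-1)\rho-\lambda)\,\textup{ch}(M)$ is false as written, and there is no reason to route the argument through $\St$ or through $\widehat{Q}_1((p-1)\rho-\lambda)$.

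Once the indexing is corrected, the proof is immediate and needs none of the machinery you propose. The paper simply uses adjunction
\[
\Hom_{G_1}(\widehat{Q}_1(\lambda),M)\cong \Hom_{G_1}(\Bbbk,\widehat{Q}_1(\lambda)^*\otimes M),
\]
notes that $\widehat{Q}_1(\lambda)^*\otimes M$ is projective over $G_1T$, and decomposes it as $\bigoplus_{\mu\in\mathbb{X}_p}\widehat{Q}_1(\mu)\otimes \Hom_{G_1}(L(\mu),\widehat{Q}_1(\lambda)^*\otimes M)$. Applying $\Hom_{G_1}(\Bbbk,-)$ singles out the $\mu=0$ summand; dividing the character identity by $\chi((p-1)\rho)$ expresses $q((p-1)\rho-\lambda)\,\textup{ch}(M)$ in the $\{q(\mu)\}$-basis, with the coefficient of $q((p-1)\rho)$ equal to the character of that same summand. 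No Andersen-type reciprocity, no Steinberg tensor identity, and no reduction to $M=\nabla(\nu)$ is required: both sides are visibly equal for every $M$ at once.
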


\begin{proof}
There is an isomorphism of $T$-modules
$$\Hom_{G_1}(\widehat{Q}_1(\lambda),M) \cong \Hom_{G_1}(\Bbbk,\widehat{Q}_1(\lambda)^* \otimes M) .$$
The module $\widehat{Q}_1(\lambda)^* \otimes M$ is projective over $G_1T$, hence there is a decomposition of $G_1T$-modules
$$\widehat{Q}_1(\lambda)^* \otimes M \cong \bigoplus_{\mu \in \mathbb{X}_p} \widehat{Q}_1(\mu) \otimes \Hom_{G_1}(L(\mu),\widehat{Q}_1(\lambda)^* \otimes M)$$
The result now follows by taking the Steinberg quotients of the characters on each side of this decomposition (noting that $q((p-1)\rho-\lambda)\text{ch}(M)$ is the Steinberg quotient of $\widehat{Q}_1(\lambda)^* \otimes M$).
\end{proof}

For the quantum group $\U{\zeta}$, the projective modules for the small quantum group lift to tilting modules for all $p$.  Using an argument similar to the one just given, we obtain a similar result here.

\begin{lemma}
Let $\lambda \in (p-1)\rho+ X_+$, $M$ in $\U{\zeta}$-mod.  Then
$$\textup{ch}(\Hom_{\Usmall{\zeta}}(T_{\zeta}(\hat{\lambda}),M))=\eta^F$$
where $\eta^F$ is the coefficient of $t_{\zeta}((p-1)\rho)$ when expressing $t_{\zeta}((p-1)\rho-\lambda)\textup{ch}(M)$ in the $\{t_{\zeta}(\mu)\}$-basis.
\end{lemma}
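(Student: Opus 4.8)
The plan is to mimic the proof of the previous lemma, with the small quantum group $\Usmall{\zeta}$ playing the role that $G_1$ played there, and the quantum Steinberg quotients $t_{\zeta}$ replacing the algebraic ones $q$. First I would use the standard tensor-hom adjunction to rewrite
$$\Hom_{\Usmall{\zeta}}(T_{\zeta}(\hat{\lambda}),M) \cong \Hom_{\Usmall{\zeta}}(\Bbbk, T_{\zeta}(\hat{\lambda})^* \otimes M)$$
as $T$-modules (more precisely as modules for the subalgebra of $\U{\zeta}$ controlling weights, i.e. $\Usmall{\zeta}$ extended by the toral part), so that the left-hand character is the $\Usmall{\zeta}$-fixed-point part of $\operatorname{ch}(T_{\zeta}(\hat\lambda)^* \otimes M)$, twisted appropriately by Frobenius.

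Next I would invoke the key structural fact recalled just above the lemma: for $\U{\zeta}$, the projective modules for the small quantum group lift to tilting modules $T_{\zeta}(\hat\mu)$ for all $p$. Since $T_{\zeta}(\hat{\lambda})^* \otimes M$ is projective over $\Usmall{\zeta}$ (being a tensor of a small-quantum projective with anything), it decomposes as a direct sum
$$T_{\zeta}(\hat{\lambda})^* \otimes M \cong \bigoplus_{\mu \in \mathbb{X}_p} T_{\zeta}(\hat{\mu}) \otimes \Hom_{\Usmall{\zeta}}(L_{\zeta}(\mu), T_{\zeta}(\hat{\lambda})^* \otimes M),$$
exactly parallel to the $G_1T$-decomposition in the previous proof, where the multiplicity spaces carry a residual action through the Frobenius quotient $\mathcal{U}(\mathfrak{g}_{\mathbb{C}})$. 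Taking the Steinberg quotient of the characters on both sides (dividing by $\chi((p-1)\rho)$) turns the left side into $t_{\zeta}((p-1)\rho - \lambda)\operatorname{ch}(M)$ — here one needs that $\operatorname{ch}(T_{\zeta}(\hat{\lambda})^*) = \operatorname{ch}(T_{\zeta}((p-1)\rho - \lambda))$, using $\hat\lambda = (p-1)\rho + (\text{something})$ and the duality/symmetry of tilting modules — and turns the right side into $\sum_{\mu} t_{\zeta}(\mu)\,(\text{mult}_\mu)^F$. Matching the coefficient of $t_{\zeta}((p-1)\rho)$, which by the triangularity in Equation (\ref{E:triangularity}) is well-defined since $(p-1)\rho$ is $\le_{\mathbb{Q}}$-maximal in $\mathbb{X}_p$, with the multiplicity space for $\mu = (p-1)\rho$ (whose simple is the Steinberg $\St_{\zeta}$, projective over $\Usmall{\zeta}$), yields exactly the claimed identity $\operatorname{ch}(\Hom_{\Usmall{\zeta}}(T_{\zeta}(\hat\lambda),M)) = \eta^F$.

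The main obstacle I anticipate is bookkeeping around the quantum Frobenius: ensuring that the multiplicity spaces $\Hom_{\Usmall{\zeta}}(L_{\zeta}(\mu), T_{\zeta}(\hat\lambda)^*\otimes M)$ genuinely carry a $\mathcal{U}(\mathfrak{g}_{\mathbb{C}})$-module structure (so that their characters are Frobenius twists $\eta^F$ of elements of $\mathbb{Z}[\mathbb{X}]^W$) and that the isomorphism above is one of $\U{\zeta}$-modules, not merely of $\Usmall{\zeta}$-modules. This is the quantum analogue of the $G = G_1\cdot G$ vs.\ $G_1T$ decomposition and should follow from the standard theory (the category of $\U{\zeta}$-modules with trivial $\Usmall{\zeta}$-action being equivalent to $\mathcal{U}(\mathfrak{g}_{\mathbb{C}})$-modules via $F$, together with projectivity of $T_{\zeta}(\hat\mu)$ over $\Usmall{\zeta}$), so I would cite the appropriate references rather than reprove it. Everything else is a direct transcription of the preceding lemma's argument.
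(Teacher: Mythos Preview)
Your proposal is correct and follows exactly the approach the paper indicates: the paper itself gives no separate proof, remarking only that ``using an argument similar to the one just given, we obtain a similar result here,'' and your write-up is precisely that transcription of the preceding lemma's proof to the quantum setting, with $\Usmall{\zeta}$ replacing $G_1$ and the tilting modules $T_{\zeta}(\hat\mu)$ (which are projective over $\Usmall{\zeta}$ for all $p$) replacing the $\widehat{Q}_1(\mu)$.
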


We now have the following comparison theorem.

\begin{theorem}\label{T:comparison}
Let $\lambda \in \mathbb{X}_p(T)$.  Then
$$q(\lambda)= \sum_{\mu \in \mathbb{X}_p(T)} t_{\zeta}(\mu) \cdot \textup{ch}(\Hom_{G_1}(\widehat{Q}_1((p-1)\rho-\lambda),\nabla^{\textup{red}}((p-1)\rho-\mu))).$$
\end{theorem}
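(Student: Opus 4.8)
The plan is to bring together the two preceding lemmas on Hom-spaces and the base-change Equation~(\ref{E:AlgToQuanTilt}). The starting point is the observation that $\widehat{Q}_1((p-1)\rho-\lambda)$, being the projective cover of $\widehat{L}_1((p-1)\rho-\lambda)$ over $G_1T$, is the $G_1T$-head of the $G$-tilting module $T((p-1)\rho + ((p-1)\rho - \lambda)^{*\text{-twist}})$ in the cases where Donkin's conjecture holds — but since we want an unconditional statement, we instead work directly with the lattice $T_{\mathcal{O}}$ lifting $T((2(p-1)\rho) - \text{(appropriate shift)})$ whose reduction mod $\Bbbk$ has $\widehat{Q}_1((p-1)\rho-\lambda)$ as a $G_1$-summand. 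Concretely, I would apply the first Hom-lemma with $M = T((p-1)\rho+\lambda)$ (or more precisely with $M$ chosen so that $\widehat{Q}_1$-multiplicities in the $G_1$-socle/head read off the orbit-sum coefficients of $q(\lambda)$): the coefficient of $q((p-1)\rho)$ in $q((p-1)\rho-\mu)\,\text{ch}(T((p-1)\rho+\lambda))$ expanded in the $\{q(\nu)\}$-basis computes $\text{ch}\,\Hom_{G_1}(\widehat{Q}_1(\mu), T((p-1)\rho+\lambda))$.

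The core of the argument is a comparison of two $p$-basis expansions. On one side, $T_{\mathcal{O}}((p-1)\rho+\lambda)\otimes_{\mathcal{O}}\Bbbk \cong T((p-1)\rho+\lambda)$ as a $G$-module, and its restriction to $G_1$ decomposes into the $\widehat{Q}_1(\mu)$ with multiplicities governed by $\Hom_{G_1}(\widehat{Q}_1(\mu), T((p-1)\rho+\lambda))$, whose characters are controlled by the first lemma. On the other side, $T_{\mathcal{O}}((p-1)\rho+\lambda)\otimes_{\mathcal{O}}\mathbb{C}$ decomposes via Equation~(\ref{E:AlgToQuanTilt}) into quantum tilting modules $T_{\zeta}(\nu)$, and restricting those to the small quantum group $\Usmall{\zeta}$ and applying the second lemma gives the $t_{\zeta}$-side. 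Since $T_{\mathcal{O}}(\cdot)$ is an $\mathcal{O}$-lattice, its character is unchanged under either specialization, so equating the two resulting elements of $\mathbb{Z}[\mathbb{X}]^W$ — then applying Lemma~\ref{L:equaltop} to strip off everything but the coefficient of the Steinberg quotient — forces the identity
\begin{equation*}
q(\lambda) = \sum_{\mu \in \mathbb{X}_p(T)} t_{\zeta}(\mu)\cdot \text{ch}\bigl(\Hom_{G_1}(\widehat{Q}_1((p-1)\rho-\lambda), \nabla^{\textup{red}}((p-1)\rho-\mu))\bigr),
\end{equation*}
once one checks that the relevant $G$-module whose lattice we lift is precisely the one whose reduction sees $\nabla^{\textup{red}}((p-1)\rho-\mu)$ in the appropriate Hom-space — this is where the identification $\Delta^{\textup{red}}/\nabla^{\textup{red}}$ as reductions of minimal/maximal $\U{\mathcal{O}}$-lattices of $L_{\zeta}$ enters, matching the $T_{\zeta}(\nu)$-summands in~(\ref{E:AlgToQuanTilt}) against $G_1$-layers.

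Carrying this out I would proceed as follows. First, fix $\lambda \in \mathbb{X}_p(T)$ and write out $\text{ch}\,\Hom_{G_1}(\widehat{Q}_1((p-1)\rho-\lambda), -)$ as a functional on $G$-mod, noting by the first lemma that on $M$ it returns the $q((p-1)\rho)$-coefficient of $q(\lambda)\,\text{ch}(M)$ in the $\{q(\mu)\}$-basis (using $(p-1)\rho - ((p-1)\rho-\lambda) = \lambda$). Second, choose $M = \nabla^{\textup{red}}((p-1)\rho-\mu)$ and observe that $q(\lambda)\,\text{ch}(\nabla^{\textup{red}}((p-1)\rho-\mu))$, being an algebraic quotient times a type-$\mathbf 1$ quantum character which is itself expressible in the $\{t_{\zeta}\}$-$p$-basis, can be compared: the reductions-mod-$\Bbbk$ versus $\otimes\mathbb{C}$ of the same $\U{\mathcal{O}}$-lattice yield, after Steinberg-quotienting, an equality whose $t_{\zeta}((p-1)\rho)$-coefficient on the quantum side matches the $q((p-1)\rho)$-coefficient on the algebraic side by Lemma~\ref{L:equaltop}. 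Third, sum over $\mu \in \mathbb{X}_p(T)$, using that $\{\nabla^{\textup{red}}((p-1)\rho-\mu)\}$ reductions assemble (via the $G_1T$-projective-cover decomposition and~(\ref{E:AlgToQuanTilt})) into a complete accounting of $\widehat{Q}_1((p-1)\rho-\lambda)$. The main obstacle I anticipate is the bookkeeping in the third step: matching, index-for-index, the $G_1$-summands $\widehat{Q}_1(\mu)$ appearing in $T_{\mathcal{O}}\otimes\Bbbk$ with the $\Usmall{\zeta}$-summands / $T_{\zeta}(\nu)$-factors appearing in $T_{\mathcal{O}}\otimes\mathbb{C}$, so that the multiplicities $n_{\lambda,\mu}$ of~(\ref{E:AlgToQuanTilt}) are exactly absorbed into $\text{ch}\,\Hom_{G_1}(\widehat{Q}_1((p-1)\rho-\lambda),\nabla^{\textup{red}}((p-1)\rho-\mu))$ rather than leaving a residual error term; this requires knowing that $\nabla^{\textup{red}}((p-1)\rho-\mu)$ is precisely the reduction picking out the contribution of $T_{\zeta}((p-1)\rho-\mu)$, which is the content of Andersen's lifting together with the $\Delta^{\textup{red}}/\nabla^{\textup{red}}$ discussion, and a careful duality/weight argument to ensure no cross-terms survive.
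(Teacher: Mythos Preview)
Your proposal contains correct fragments but is built around machinery the proof does not need, and in places that machinery actively points in the wrong direction. The paper's argument is a short formal character computation; no $\U{\mathcal{O}}$-lattices, no Equation~(\ref{E:AlgToQuanTilt}), no matching of $G_1$-summands against $\Usmall{\zeta}$-summands, and no tilting module $T_{\mathcal{O}}((p-1)\rho+\lambda)$ enter at all. Note in particular that bringing in $T_{\mathcal{O}}((p-1)\rho+\lambda)$ and (\ref{E:AlgToQuanTilt}) would yield statements about $t(\lambda)$, not $q(\lambda)$; your hedge ``or more precisely with $M$ chosen so that\dots'' acknowledges this but does not resolve it, and your anticipated ``main obstacle'' in step~3 is an artifact of this detour rather than a genuine difficulty in the theorem.

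The missing one-line idea is this: since $\{t_{\zeta}(\mu)\}_{\mu\in\mathbb{X}_p}$ is already known to be a $p$-basis for $\mathbb{Z}[\mathbb{X}]^W$, you may simply \emph{write}
\[
q(\lambda)=\sum_{\mu\in\mathbb{X}_p} t_{\zeta}(\mu)\,\eta_{\mu}^F
\]
for some unknown $\eta_{\mu}\in\mathbb{Z}[\mathbb{X}]^W$, and the task is to identify each $\eta_{\sigma}$. Multiply both sides by $\textup{ch}(\nabla^{\textup{red}}((p-1)\rho-\sigma))=\textup{ch}(L_{\zeta}((p-1)\rho-\sigma))$. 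Expanding the left side in the $\{q(\gamma)\}$-basis, the first Hom-lemma (with $(p-1)\rho-\lambda$ in the role of $\lambda$) shows the coefficient of $q((p-1)\rho)$ is $\textup{ch}\,\Hom_{G_1}(\widehat{Q}_1((p-1)\rho-\lambda),\nabla^{\textup{red}}((p-1)\rho-\sigma))$. Expanding the right side in the $\{t_{\zeta}(\gamma)\}$-basis, the second Hom-lemma shows the coefficient of $t_{\zeta}((p-1)\rho)$ is $\eta_{\sigma}^F$. Lemma~\ref{L:equaltop} says these two top coefficients agree, and you are done. There is no summation step, no lattice comparison, and the only use of the $\nabla^{\textup{red}}$ construction is the bare character equality $\textup{ch}(\nabla^{\textup{red}}(\nu))=\textup{ch}(L_{\zeta}(\nu))$.
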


\begin{proof}
First, since $\{t(\mu)\}$ is a $p$-basis, there are coefficients $\eta_{\mu} \in \mathbb{Z}[X]^W$ such that
$$q(\lambda) = \sum_{\mu \in \mathbb{X}_p} t_{\zeta}(\mu){\eta_{\mu}}^F.$$
Fix some $\sigma \in \mathbb{X}_p$.  Since $\textup{ch}(\nabla^{\textup{red}}((p-1)\rho-\sigma))$ and $\textup{ch}(L_{\zeta}((p-1)\rho-\sigma))$ are equal, we have a character equality
$$q(\lambda)\textup{ch}(\nabla^{\textup{red}}((p-1)\rho-\sigma)) = \sum_{\mu \in \mathbb{X}_p} {t_{\zeta}}(\mu){\eta_{\mu}}^F\textup{ch}(L_{\zeta}((p-1)\rho-\sigma)).$$
Expanding the RHS into the $p$-basis $\{t_{\zeta}(\gamma)\}$, we find that the coefficient of $t_{\zeta}((p-1)\rho)$ is ${\eta_{\sigma}}^F$.
Expanding out the LHS into the $p$-basis $\{q(\gamma)\}$, we have that the coefficient of $q((p-1)\rho)$ is
$$\textup{ch}(\Hom_{G_1}(\widehat{Q}_1((p-1)\rho-\lambda),\nabla^{\textup{red}}((p-1)\rho-\sigma))).$$
The result now follows by Lemma \ref{L:equaltop}.
\end{proof}

Since
$$\Hom_{G_1}(\widehat{Q}_1((p-1)\rho-\lambda),\nabla^{\textup{red}}((p-1)\rho-\sigma)) \cong \Bbbk,$$
it follows that each $q(\lambda)$ is equal to $t_{\zeta}(\lambda)$ plus a non-negative sum of various $s(\mu)$.  The following then holds.

This now allows us to make a general statement about the coefficients $a_{\mu,\lambda}$.  Unlike the $b_{\mu,\lambda}$, we are not able to say in general that the nondecreasing pattern holds.  However, by comparing with the $c_{\mu,\lambda}$, we can say that all orbits that can appear, do.

\begin{cor}
If $p>2$, and $p>3$ if $G$ has a root system of type $\textup{G}_2$, then $a_{\mu,\lambda} \ge c_{\mu,\lambda}$ for all $\lambda \in \mathbb{X}_p$ and $\mu \le \lambda$.  In particular, if $\lambda \in \mathbb{X}_p$ and $\mu \in \mathbb{X}^+$ with $(\mu-\rho) \uparrow (\lambda - \rho)$, then $a_{\mu,\lambda} \ge 1$.
\end{cor}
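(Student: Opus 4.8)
The plan is to read the inequality $a_{\mu,\lambda}\ge c_{\mu,\lambda}$ off of Theorem~\ref{T:comparison} together with the observation made just before the corollary, and then to obtain the ``in particular'' from the monotonicity of the $c_{\mu,\lambda}$ established earlier in Section~3.

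For the first assertion I would start from the expansion provided by Theorem~\ref{T:comparison},
$$q(\lambda)=\sum_{\mu'\in\mathbb{X}_p}t_\zeta(\mu')\,\phi_{\mu'},\qquad \phi_{\mu'}=\textup{ch}\bigl(\Hom_{G_1}(\widehat{Q}_1((p-1)\rho-\lambda),\nabla^{\textup{red}}((p-1)\rho-\mu'))\bigr),$$
and separate off the summand $\mu'=\lambda$. By the sentence preceding the corollary this summand equals $t_\zeta(\lambda)\,\textup{ch}(\Bbbk)=t_\zeta(\lambda)$, so
$$q(\lambda)-t_\zeta(\lambda)=\sum_{\mu'\ne\lambda}t_\zeta(\mu')\,\phi_{\mu'}.$$
Each $\nabla^{\textup{red}}((p-1)\rho-\mu')$ is a finite-dimensional $G$-module by the construction in Section~4, and, as in the proof of Theorem~\ref{T:comparison} (equivalently by the lemma computing $\textup{ch}\,\Hom_{G_1}(\widehat{Q}_1(-),-)$), each $\phi_{\mu'}$ is the Frobenius twist of an element of $\mathbb{Z}[\mathbb{X}]^W$; being a character it has non-negative coefficients, hence is a non-negative integral combination of orbit sums, and the same holds for each $t_\zeta(\mu')$. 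Since the set of non-negative integral combinations of orbit sums is closed under addition and multiplication, the right-hand side above is again such a combination; comparing the coefficient of $s(\mu)$ on both sides gives $a_{\mu,\lambda}\ge c_{\mu,\lambda}$. For $\mu\not\le\lambda$ both sides vanish, so restricting to $\mu\le\lambda$ loses nothing.

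For the ``in particular'', note that $(\mu-\rho)\uparrow(\lambda-\rho)$ implies $\mu\le\lambda$, so the first part applies and it remains to show $c_{\mu,\lambda}\ge1$. By the statement in Section~3 that Theorem~\ref{T:OldMain} holds for the coefficients $c_{\mu,\lambda}$, the relation $(\mu-\rho)\uparrow(\lambda-\rho)$ forces $c_{\mu,\lambda}\ge c_{\lambda,\lambda}$; and $c_{\lambda,\lambda}=1$, because comparing the (unique) maximal weight on the two sides of $\textup{ch}(T_\zeta((p-1)\rho+\lambda))=t_\zeta(\lambda)\,\chi((p-1)\rho)$ shows that $e(\lambda)$ occurs in $t_\zeta(\lambda)$ with coefficient $1$. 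Hence $a_{\mu,\lambda}\ge c_{\mu,\lambda}\ge1$.

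I do not expect a real obstacle once Theorem~\ref{T:comparison} is in hand: the corollary is essentially bookkeeping, resting on the two elementary facts that a $W$-invariant Laurent polynomial with non-negative coefficients is a non-negative combination of orbit sums and that such combinations are closed under multiplication. The only genuinely load-bearing input is the identification of the $\mu'=\lambda$ coefficient with $\textup{ch}(\Bbbk)$ (which uses that $(p-1)\rho-\lambda$ is restricted and strong linkage on $\nabla^{\textup{red}}((p-1)\rho-\lambda)$) and, more broadly, the availability of the Section~4 comparison machinery — the quantum Frobenius morphism and the reductions $\nabla^{\textup{red}}$ together with their liftings — and it is precisely here that the hypothesis ``$p>2$, and $p>3$ if $G$ is of type $\mathrm{G}_2$'' enters; without it there is nothing to unwind.
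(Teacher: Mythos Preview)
Your argument is correct and follows exactly the route the paper intends: the paper's ``proof'' is simply the sentence preceding the corollary (identifying the $\mu'=\lambda$ summand as $t_\zeta(\lambda)$ and observing that the rest is a non-negative sum of orbit sums), together with the monotonicity of the $c_{\mu,\lambda}$ from Section~3 for the ``in particular''. You have unpacked precisely these steps, including the reason $c_{\lambda,\lambda}=1$ and the role of the hypothesis on $p$, so there is nothing to add.
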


\subsection{}
We also obtain useful facts about the characters $t(\lambda)$.  Andersen conjectured in \cite{And1} that for all $\lambda \in \mathbb{X}^+$ such that
$$\langle \lambda+\rho, \alpha_0^{\vee} \rangle < p^2,$$
it should hold that
$$T_{\mathcal{O}}(\lambda) \otimes_{\mathcal{O}} \mathbb{C} \cong T_{\zeta}(\lambda).$$
In other words, for such weights the algebraic and quantum tilting modules should agree.  When $p \ge 2h-2$, this conjecture implies Lusztig's conjecture, therefore it does not hold in general.

Nonetheless, it is an interesting question to consider.  As it pertains to Steinberg quotients, we note that
$$\langle (p-1)\rho+\lambda + \rho, \alpha_0^{\vee} \rangle = p(h-1)+\langle \lambda, \alpha_0^{\vee} \rangle.$$
Thus if
$$\langle \lambda, \alpha_0^{\vee} \rangle <  p(p - h + 1),$$
Andersen's conjecture would imply that
$$t(\lambda) = t_{\zeta}(\lambda).$$

It follows from (\ref{E:AlgToQuanTilt}) and Theorem \ref{T:OldMain} that for all $\lambda \in \mathbb{X}^+$, we have
$$t(\lambda)=t_{\zeta}(\lambda) + \sum_{(\mu-\rho) \uparrow (\lambda - \rho)} n^{\prime}_{\mu,\lambda} t_{\zeta}(\mu),$$
where $n^{\prime}_{\mu,\lambda}=n_{(p-1)\rho+\mu,(p-1)\rho+\lambda}$ from (\ref{E:AlgToQuanTilt}).
The next result follows directly from this observation.

\begin{prop}
Let $\lambda \in \mathbb{X}_p$.
\begin{enumerate}
    \item If for some $\mu$ we have $b_{\mu,\lambda}>c_{\mu,\lambda}$, then $b_{\gamma,\lambda}>c_{\gamma,\lambda}$ for all $(\gamma-\rho) \uparrow (\mu-\rho)$.
    \item Let $\gamma$ be the minimal dominant weight such that $(\gamma - \rho) \uparrow (\lambda-\rho)$.  Then $t(\lambda)=t_{\zeta}(\lambda)$ if and only if $b_{\gamma,\lambda}=c_{\gamma,\lambda}$.
\end{enumerate}
\end{prop}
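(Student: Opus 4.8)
The plan is to read off both parts directly from the expansion
$$t(\lambda)=t_{\zeta}(\lambda)+\sum_{(\mu-\rho)\uparrow(\lambda-\rho)}n'_{\mu,\lambda}\,t_{\zeta}(\mu),$$
which was established just before the statement, together with the monotonicity of the quantum quotients (the analog of Theorem \ref{T:OldMain} for the $c_{\mu,\lambda}$). First I would fix notation: write $t(\lambda)=\sum b_{\mu,\lambda}s(\mu)$ and $t_{\zeta}(\mu)=\sum_{\nu}c_{\nu,\mu}s(\nu)$, so that $b_{\nu,\lambda}=c_{\nu,\lambda}+\sum_{(\mu-\rho)\uparrow(\lambda-\rho)}n'_{\mu,\lambda}c_{\nu,\mu}$. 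Since all the $n'_{\mu,\lambda}$ and $c_{\nu,\mu}$ are non-negative, $b_{\nu,\lambda}\ge c_{\nu,\lambda}$ always, with equality for a given $\nu$ precisely when every term $n'_{\mu,\lambda}c_{\nu,\mu}$ with $(\mu-\rho)\uparrow(\lambda-\rho)$ vanishes.

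For part (1): suppose $b_{\mu,\lambda}>c_{\mu,\lambda}$, so there is some $\mu$ with $(\mu-\rho)\uparrow(\lambda-\rho)$, $n'_{\mu,\lambda}>0$, and $c_{\mu,\mu}\ge 1$ (indeed $c_{\mu,\mu}=1$ since $\mu$ is the highest weight of $t_{\zeta}(\mu)$). Now let $(\gamma-\rho)\uparrow(\mu-\rho)$. I want $b_{\gamma,\lambda}>c_{\gamma,\lambda}$, i.e. I need one nonzero term $n'_{\mu',\lambda}c_{\gamma,\mu'}$. Take $\mu'=\mu$: we already have $n'_{\mu,\lambda}>0$, and $c_{\gamma,\mu}\ge 1$ because the monotonicity theorem for the quantum coefficients (coefficients $c_{\gamma,\mu}$ of $t_{\zeta}(\mu)$) gives $c_{\gamma,\mu}\ge c_{\mu,\mu}=1$ whenever $(\gamma-\rho)\uparrow(\mu-\rho)$ — this uses that all orbits in the $\uparrow$-interval below the highest weight actually occur with positive multiplicity, which is exactly the content of Theorem \ref{T:OldMain} applied to $t_{\zeta}$. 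Hence $b_{\gamma,\lambda}\ge c_{\gamma,\lambda}+n'_{\mu,\lambda}c_{\gamma,\mu}>c_{\gamma,\lambda}$, as desired. (Transitivity of $\uparrow$ ensures $(\gamma-\rho)\uparrow(\lambda-\rho)$ so that $c_{\gamma,\lambda}$ and the $\gamma$-coefficient machinery are in play, but this is not even needed for the inequality.)

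For part (2): the equality $t(\lambda)=t_{\zeta}(\lambda)$ holds iff all $n'_{\mu,\lambda}=0$. The forward implication is immediate: if $t(\lambda)=t_{\zeta}(\lambda)$ then $b_{\nu,\lambda}=c_{\nu,\lambda}$ for all $\nu$, in particular for $\nu=\gamma$. For the converse, suppose $t(\lambda)\ne t_{\zeta}(\lambda)$, so some $n'_{\mu_0,\lambda}>0$ with $(\mu_0-\rho)\uparrow(\lambda-\rho)$; then $b_{\mu_0,\lambda}>c_{\mu_0,\lambda}$, and applying part (1) with $\mu=\mu_0$ and $\gamma$ the minimal dominant weight with $(\gamma-\rho)\uparrow(\lambda-\rho)$ — note $(\gamma-\rho)\uparrow(\mu_0-\rho)$ holds by minimality of $\gamma$ together with the fact that $\uparrow$-minimality in the relevant interval is genuine — yields $b_{\gamma,\lambda}>c_{\gamma,\lambda}$, contradicting the hypothesis $b_{\gamma,\lambda}=c_{\gamma,\lambda}$.

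The only real subtlety, and the step I would take most care over, is the claim that $(\gamma-\rho)$ being the \emph{minimal} dominant weight with $(\gamma-\rho)\uparrow(\lambda-\rho)$ forces $(\gamma-\rho)\uparrow(\mu-\rho)$ for \emph{every} $\mu$ with $(\mu-\rho)\uparrow(\lambda-\rho)$. This is where one uses the structure of the $\uparrow$ ordering on a linkage class: within the dominant cone the $\uparrow$-down-set of $\lambda-\rho$ has a unique minimal element (the bottom alcove / wall representative of the linkage class intersected with the dominant region), and that element lies below every other element of the down-set. I would cite \cite[II.6.4]{rags} and the standard facts about $W_p$-orbits and alcove geometry for this, rather than reprove it. Everything else is bookkeeping with non-negative integers.
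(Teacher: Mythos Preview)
Your approach is exactly the paper's: the proposition is stated there as following ``directly'' from the expansion $t(\lambda)=t_{\zeta}(\lambda)+\sum n'_{\mu,\lambda}\,t_{\zeta}(\mu)$, and you are supplying the details. The logic for part (2), including the care you take over the $\uparrow$-minimal dominant weight in the linkage class lying below every other such weight, is correct.

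There is, however, a slip in your argument for part (1). From $b_{\mu,\lambda}>c_{\mu,\lambda}$ you write ``so there is some $\mu$ with $(\mu-\rho)\uparrow(\lambda-\rho)$, $n'_{\mu,\lambda}>0$, and $c_{\mu,\mu}\ge 1$,'' and then set $\mu'=\mu$. But the witnessing index in the sum $\sum_{\mu'} n'_{\mu',\lambda}c_{\mu,\mu'}>0$ need not be the original $\mu$; it is entirely possible that $n'_{\mu,\lambda}=0$ while some strictly higher $t_{\zeta}(\mu_0)$ contributes to the $s(\mu)$-coefficient. What the expansion actually gives you is some $\mu_0$ with $(\mu_0-\rho)\uparrow(\lambda-\rho)$, $n'_{\mu_0,\lambda}>0$, and $c_{\mu,\mu_0}>0$. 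The last condition forces $(\mu-\rho)\uparrow(\mu_0-\rho)$ by linkage for quantum tilting modules, so for any $\gamma$ with $(\gamma-\rho)\uparrow(\mu-\rho)$ transitivity yields $(\gamma-\rho)\uparrow(\mu_0-\rho)$, and then the quantum analogue of Theorem~\ref{T:OldMain} gives $c_{\gamma,\mu_0}\ge c_{\mu_0,\mu_0}=1$. Your intended inequality $b_{\gamma,\lambda}\ge c_{\gamma,\lambda}+n'_{\mu_0,\lambda}c_{\gamma,\mu_0}>c_{\gamma,\lambda}$ now goes through with $\mu_0$ in place of $\mu$. The fix is purely a matter of keeping the two indices separate; the underlying idea is right.
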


%%%%%%%%%%%%%%%%%%%%%%%%%%%%%%%%%%%%%%%%%%%%%%%%%%%%%%%%%%%%%%%%%%%%%%%%%%%%%%%%%%%%%%%%%%%%%%%%%%%%%%%
\section{Steinberg quotients and Kazhdan-Lusztig polynomials}
%%%%%%%%%%%%%%%%%%%%%%%%%%%%%%%%%%%%%%%%%%%%%%%%%%%%%%%%%%%%%%%%%%%%%%%%%%%%%%%%%%%%%%%%%%%%%%%%%%%%%%%

\subsection{}

Let $\lambda \in \mathbb{X}_p$.  For all $\mu \in \mathbb{X}^+$ there are integers $d_{\mu,\lambda}$ such that

\begin{equation}\label{E:Weylsum}
    \text{ch}(L(\lambda)) = \sum d_{\mu,\lambda} \, \chi(\mu).
\end{equation}
We can compare these coefficients to the $a_{\mu,\lambda}$ thanks to a key result due to Kato \cite[Theorem 3.5]{K}.  Our proof follows Fiebig \cite{Fie}.

\begin{theorem}\label{T:WeylSteinberg}
Suppose that $\textup{ch}(L(\lambda-\rho))$ is given by Lusztig's character formula for all $\lambda \in \mathbb{X}_p$ such that $\lambda-\rho$ is $p$-regular.  Then for all $\lambda \in \mathbb{X}_p$ and $\mu \in \mathbb{X}^+$ we have
$$a_{\mu,\lambda} = |d_{\mu-\rho,\lambda-\rho}|.$$
\end{theorem}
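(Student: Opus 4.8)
The plan is to relate the Steinberg quotient $q(\lambda)$ to the simple character $\text{ch}(L(\lambda))$ by a duality/orthogonality argument, and then invoke Kato's theorem to identify the resulting coefficients with Kazhdan-Lusztig polynomial values. Recall that $q(\lambda) = \widehat{Q}_1((p-1)\rho+w_0\lambda)/\chi((p-1)\rho)$, so the coefficients $a_{\mu,\lambda}$ encode the good filtration multiplicities of $\widehat{Q}_1((p-1)\rho + w_0\lambda)$. The key structural input is the $G_1T$-reciprocity $[\widehat{Q}_1(\nu) : \widehat{Z}_1(\sigma)] = [\widehat{Z}_1'(\sigma) : \widehat{L}_1(\nu)]$ (equivalently phrased via $\widehat\nabla$-filtrations), which lets one pass between PIM multiplicities and composition multiplicities of baby Verma modules. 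The latter, under the hypothesis that the LCF holds, are governed by Kazhdan-Lusztig polynomials of the affine Weyl group $W_p$ evaluated at $1$, by \cite{AJS} and the periodic-module interpretation.

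First I would set up the dictionary. Under the LCF hypothesis for $\text{ch}(L(\lambda-\rho))$ with $\lambda-\rho$ $p$-regular, \cite[Theorem 3.5]{K} gives (in Fiebig's reformulation \cite{Fie}) an equality, valid on $p$-regular blocks, between the multiplicities attached to $\widehat{Q}_1$ and the ``inverse'' Kazhdan-Lusztig data, namely that the multiplicity of $\widehat{Z}_1(\sigma)$ (or its Weyl/costandard analog) in a filtration of $\widehat{Q}_1(\nu)$ is $P_{x,y}(1)$ for the appropriate Weyl group elements $x,y$ attached to $\sigma, \nu$. On the other hand, the coefficients $d_{\mu-\rho,\lambda-\rho}$ in $\text{ch}(L(\lambda-\rho)) = \sum d_{\mu-\rho,\lambda-\rho}\chi(\mu-\rho)$ are, under the LCF, exactly $(-1)^{\ell(x)-\ell(y)} P_{y,x}(1)$ (Lusztig's formula, $\cite{L}$, $\cite[\text{II.8}]{rags}$). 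So $|d_{\mu-\rho,\lambda-\rho}| = P_{y,x}(1)$. The content of the theorem is that the two sets of KL evaluations match after the $w_0$-twist and the $(p-1)\rho$-shift built into the definition of $q(\lambda)$; this matching is precisely the substance of Kato's theorem, which exhibits a combinatorial duality (essentially inversion of the KL matrix on the antispherical/periodic module, combined with the $w_0$-symmetry $P_{x,y} = P_{w_0 x, w_0 y}$ type relations) between the two triangular systems.

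The steps, then: (1) express $a_{\mu,\lambda}$ via $G_1T$-reciprocity as a baby-Verma composition multiplicity in a PIM, being careful about the $w_0$ in the definition of $q(\lambda)$ and the shift by $(p-1)\rho$; (2) under the LCF hypothesis, translate both $a_{\mu,\lambda}$ and $|d_{\mu-\rho,\lambda-\rho}|$ into evaluations of affine Kazhdan-Lusztig polynomials at $1$, keeping track of which polynomial ($P_{x,y}$ vs.\ its inverse, and whether the arguments get reflected by $w_0$ or by the longest element of $W_p$); (3) cite Kato's theorem \cite[Theorem 3.5]{K} (with Fiebig's account \cite{Fie}) to conclude that these two KL evaluations coincide, which gives $a_{\mu,\lambda} = |d_{\mu-\rho,\lambda-\rho}|$ on $p$-regular blocks; (4) handle the $p$-singular $\mu$ by translation functors — moving $\mu$ off the walls — using that both sides behave compatibly with translation (the $a_{\mu,\lambda}$ because good filtration multiplicities transform predictably, the $d$'s because $\text{ch}(L)$ does). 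I expect the main obstacle to be step (2): the bookkeeping of normalizations — matching the $\uparrow$-ordering on weights with the Bruhat order on $W_p$, pinning down exactly which KL polynomial appears on each side (the algebraic side naturally produces the inverse KL matrix, by Andersen-Jantzen-Soergel, while the simple-character side produces the ordinary one), and verifying that the $w_0$-contragredient twist in $q(\lambda)$ together with the $(p-1)\rho$-shift is exactly the involution that makes Kato's duality line up. Once the indices are correctly aligned, the equality is Kato's theorem essentially verbatim.
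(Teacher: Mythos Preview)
Your proposal is correct and follows essentially the same architecture as the paper's proof: both rely on $G_1T$-reciprocity to convert $a_{\mu,\lambda}$ into a baby Verma composition multiplicity, and both invoke Kato/Fiebig as the bridge to the coefficients $d_{\mu-\rho,\lambda-\rho}$.

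The one noteworthy difference is that the paper avoids your detour through Kazhdan--Lusztig polynomials entirely. Rather than separately expressing each side as a KL evaluation and then matching them (your steps (2)--(3), which you rightly flag as the bookkeeping bottleneck), the paper cites a single identity extracted from Fiebig's Corollary~3.4 and the proof of his Theorem~3.5: namely that, with $w\bigcdot 0=\lambda-\rho$ and $x\bigcdot 0=\mu-\rho$, one has $[\widehat{Z}_1(w_0x\bigcdot 0):\widehat{L}_1(w_0w\bigcdot 0)]=|d_{x\bigcdot 0,\,w\bigcdot 0}|$ directly. After that, the argument is purely weight arithmetic: shift by $p\rho$, apply reciprocity, and check that the relevant $\widehat{Q}_1$ has highest weight $(p-1)\rho+\lambda$ and that the dominant representative of the baby Verma weight is $\mu$. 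So your anticipated ``main obstacle'' of aligning KL normalizations is exactly the work that Fiebig has already packaged, and you can bypass it by citing him.
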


\begin{proof}
Applying the translation principle, we may assume that $\lambda-\rho$ is strongly linked to $0$.  Therefore let $w \in W_p$ and $x \in W_p$ be such that
$$w \bigcdot 0 = \lambda-\rho, \quad x \bigcdot 0 = \mu-\rho.$$
Looking at both Corollary 3.4 and the proof of Theorem 3.5 from \cite{Fie}, we see that
$$[\widehat{Z}_1(w_0x \bigcdot 0): \widehat{L}_1(w_0w \bigcdot 0)] = |d_{x \, \bigcdot \, 0, w \, \bigcdot \, 0}|.$$
We then have
\begin{align*}
[\widehat{Z}_1(w_0x \bigcdot 0): \widehat{L}_1(w_0w \bigcdot 0)] & = [\widehat{Z}_1(p\rho + w_0x \bigcdot 0): \widehat{L}_1(p\rho + w_0w \bigcdot 0)]\\
& = [\widehat{Z}_1((p-1)\rho+(w_0x \bigcdot 0+\rho)): \widehat{L}_1((p-1)\rho+(w_0w \bigcdot 0 + \rho))]\\
& = (\widehat{Q}_1((p-1)\rho+(w_0w \bigcdot 0 + \rho)):\widehat{Z}_1((p-1)\rho+(w_0x \bigcdot 0+\rho))),
\end{align*}
The first equality above follows from the fact that for all $\mu,\sigma,\gamma \in X$ we have
$$[\widehat{Z}_1(\mu):\widehat{L}_1(\sigma)]=[\widehat{Z}_1(\mu+p\gamma):\widehat{L}_1(\sigma+p\gamma)].$$
We note that because the dot action of $W_p$ is a group action, we have for any $y \in W_p$ that $w_0y \bigcdot 0 = w_0 \bigcdot (y \bigcdot 0)$.  Thus the highest weight of
$$\widehat{Q}_1((p-1)\rho+(w_0w \bigcdot 0 + \rho))$$
is
\begin{align*}
    2(p-1)\rho+w_0((p-1)\rho+(w_0w \bigcdot 0 + \rho)) & = 2(p-1)\rho + w_0(p-1)\rho + w_0(w_0 \bigcdot w \bigcdot 0 + \rho)\\
    & = 2(p-1)\rho-(p-1)\rho+w_0(w_0(w \bigcdot 0 + \rho)-\rho + \rho)\\
    &= (p-1)\rho+w\bigcdot 0 + \rho\\
    &= (p-1)\rho+\lambda.
\end{align*}
We also have that the dominant weight in the $W$-orbit of $w_0x \bigcdot 0 + \rho$ is
\begin{align*}
    w_0(w_0 \bigcdot x \bigcdot 0 + \rho) & = w_0(w_0(x \bigcdot 0+\rho) - \rho + \rho)\\
    & = w_0(w_0(x \bigcdot 0) +\rho))\\
    & = x \bigcdot 0 + \rho\\
    & = \mu.
\end{align*}
This shows that $s(\mu)$ occurs in $q(\lambda)$ with multiplicity $|d_{\mu-\rho,\lambda-\rho}|$.
\end{proof}

\subsection{}

The result in the previous section was in terms of the algebraic group, but the same holds for the quantum group in view of the facts recalled in Section 4.  In this case, we indeed do have the hypothesis as holding, at least when $p>h$ (see \cite[II.H.12]{rags}).

We further note that in the case of the quantum group, if $\lambda-\rho$ is $p$-regular, then the orbit multiplicities in $t_{\zeta}(\lambda)$ are given by evaluations of Kazhdan-Lusztig polynomials for all $\lambda \in \mathbb{X}^+$.  This follows from Proposition \ref{P:quantumtiltingtensor} and \cite[Corollary 4.10]{K}.

%%%%%%%%%%%%%%%%%%%%%%%%%%%%%%%%%%%%%%%%%%%%%%

\section{Lower bounds on Steinberg Quotients}

\subsection{} In this section we consider the primary problem of computing $t(\lambda)$ and $t_{\zeta}(\lambda)$ in some reasonable way.  Specifically, we seek to find properties that completely determine these characters.  Using Weyl characters as a guide, we will define an approximation for $t_{\zeta}(\lambda)$ by keying in on one of its properties.  These approximations will be lower bounds on Steinberg quotients, and are computable by a straightforward algorithm.  We will then provide computational evidence that suggests that these approximations are reasonably close to the $t_{\zeta}(\lambda)$ (or at least are so in some nontrivial examples).

\subsection{}

Donkin gave alternate proof of Weyl's formula in \cite{D4}, also recounted in \cite[Proposition II.5.10]{rags}.  Boiling it down to its essence, we see that Weyl characters are a subset of Euler characteristics, and  Euler characteristics can be shown to satisfy the following properties:

\begin{itemize}
    \item For every $\lambda \in \mathbb{X}^+$ and $w \in W$,
    $$\chi(w \bigcdot \lambda)=(-1)^{\ell(w)}\chi(\lambda).$$
    \item For all $\lambda \in \mathbb{X}^+$,
    $$s(\lambda)=\sum_{w \in W/W_{\lambda}} \chi(w\lambda).$$
    \item For all $\lambda \in \mathbb{X}$,
    $$\chi(\lambda) \in \mathbb{Z}[\mathbb{X}]^W.$$
\end{itemize}

Once we know that for $\lambda \in \mathbb{X}^+$, the highest weight of $\chi(\lambda)$ is $\lambda$, occuring once, then these properties completely determine $\chi(\lambda)$.  Together they provide a recursive way to compute the orbit sums that appear in $\chi(\lambda)$, starting with the outer orbit.  This recursive process can be expressed succinctly as a division between signed $W$-orbit sums in $\mathbb{Z}[\mathbb{X}]$.

\subsection{}

For each $\lambda \in \mathbb{X}^+$, let $\mathcal{I}(\lambda)$ denote the ``smallest'' element in $\mathbb{Z}[\mathbb{X}]^W$ of highest weight $\lambda$ having the property that, for all $\mu \in \mathbb{X}^+$,
$$\mathcal{I}(\lambda)\chi(\mu)$$
has the character of a good filtration module.  It is then trivially true that $\mathcal{I}(\lambda)=\chi(\lambda)$.  This is because $\mathcal{I}(\lambda)\chi(0)=\mathcal{I}(\lambda)$, so if $\mathcal{I}(\lambda)$ satisfies the condition above then $\mathcal{I}(\lambda)$ is itself a nonnegative sum of Weyl characters which has highest weight $\lambda$.  Thus $\chi(\lambda)$ appears at least once in this sum.  Since $\mathcal{I}(\lambda)$ is meant to be minimal with respect to this property, we must have that $\mathcal{I}(\lambda)=\chi(\lambda)$.

Motivated by this observation, and looking at Theorem \ref{T:MainGen}, we make the following definition.  We say that an element $\eta \in \mathbb{Z}[\mathbb{X}]^W$ has a \textit{good Steinberg multiplication} if
\begin{equation}\label{E:goodfiltrationcharacter}
\eta\chi((p-1)\rho+p\gamma) \quad \text{is a good filtration character} \quad \forall \gamma \in \mathbb{X}^+.
\end{equation}
We then define, for each $\lambda \in \mathbb{X}^+$, the character $\mathcal{M}_p(\lambda)$ to be the smallest element in $\mathbb{Z}[\mathbb{X}]^W$ having highest weight $\lambda$ and satisfying the good Steinberg multiplication property.

We will make precise what we mean by ``smallest'' by way of an algorithm for computing $\mathcal{M}_p(\lambda)$ (this algorithm will evidently make a minimal choice at each stage).  We will then prove that this algorithm always produces a well-defined element that has the good Steinberg multiplication property.

Before proceeding, we show that the good Steinberg multiplcation property, which a priori involves checking an infinite number of character multiplications, can in fact be checked by multiplying by a finite number of characters.  At the same time, it turns out to be too optimistic to hope that this property can simply be checked by multiplication against the Steinberg character itself.

\begin{theorem}\label{T:Thisisenough}
Let $\eta \in \mathbb{Z}[\mathbb{X}]^W$.  Let $m$ be an integer such that $pm > \langle \sigma, \alpha_0^{\vee} \rangle$ as $\sigma$ ranges over the weights in $\eta$.  Then the following hold.

\begin{enumerate}
\item $\eta$ satisfies the good Steinberg multiplication property if and only if
$$\eta\chi((p-1)\rho+p\gamma) \quad \text{is a good filtration character} \quad \forall \gamma \in \mathbb{X}_{m}.$$
\item If $\eta$ is equal to the character of a $G$-module, then $\eta$ satisfies the good Steinberg multiplication property if and only if
$$\eta\chi((p-1)\rho) \quad \text{is a good filtration character}.$$
\item If $\eta$ is not the character of a $G$-module, then in general the previous condition fails.
\end{enumerate}

\end{theorem}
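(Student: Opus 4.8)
The plan is to handle the three parts in sequence, with part (1) providing the main engine. For part (1), the "only if" direction is immediate from the definition of the good Steinberg multiplication property, since $\mathbb{X}_m \subseteq \mathbb{X}^+$. For the "if" direction, suppose $\eta\chi((p-1)\rho+p\gamma)$ is a good filtration character for all $\gamma \in \mathbb{X}_m$, and let $\delta \in \mathbb{X}^+$ be arbitrary. First I would write $\delta = \gamma + p\gamma'$ with $\gamma \in \mathbb{X}_p \subseteq \mathbb{X}_m$ (and more generally arrange that $\gamma$ lies in the desired box) and $\gamma' \in \mathbb{X}^+$; then using the Andersen–Haboush identity $\chi((p-1)\rho+p\delta) = \chi((p-1)\rho+p\gamma)\chi(\gamma')^F$, one reduces to showing that if $\sigma$ is a good filtration character then so is $\sigma\chi(\gamma')^F$. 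This last fact follows from the module-theoretic statement that the tensor product of a good filtration $G$-module with a Frobenius-twisted good filtration module again has a good filtration (a consequence of the Andersen–Haboush theorem plus Mathieu's theorem, as recalled in Section \ref{SS:SteinbergRecall}). Actually one must be slightly careful: $\eta$ is only assumed $W$-invariant, not a genuine character, so I would phrase the tensor-product fact at the level of $\mathbb{Z}[\mathbb{X}]^W$ as: multiplication by $\chi(\gamma')^F$ sends good filtration characters to good filtration characters — which is still just the module statement applied to an honest good filtration module representing $\sigma$.

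For part (2), suppose $\eta = \mathrm{ch}(M)$ for a $G$-module $M$ and $\eta\chi((p-1)\rho)$ is a good filtration character. I would argue that $\St \otimes M$ then has a good filtration (since its character $\eta\chi((p-1)\rho)$ is a nonnegative sum of Weyl characters and $\St \otimes M$ is a tilting-friendly situation — more precisely, $\St \otimes M$ is a summand of $\St \otimes \St^* \otimes \St \otimes M$-type constructions, but cleaner is: a module whose character is a good filtration character and which is a direct summand of something with a good filtration has a good filtration). Then for any $\gamma \in \mathbb{X}^+$, $\St \otimes M \otimes \nabla(\gamma)^{(1)}$ has a good filtration by the tensor-product theorem, and its character is $\eta\chi((p-1)\rho)\chi(\gamma)^F = \eta\chi((p-1)\rho+p\gamma)$, again by Andersen–Haboush. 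The subtle point I expect to be the main obstacle is justifying that $\St \otimes M$ genuinely has a good filtration merely from its character being good: the honest route is to observe $\St \otimes M$ is a summand of $T((p-1)\rho) \otimes M$ composed with the identification via Steinberg self-duality, or to invoke that $M$ may be replaced by a tilting module with the same good-filtration multiplicities; I would spell out the cleanest available version using results already cited, likely via the fourth bulleted fact in Section \ref{SS:SteinbergRecall} with $\lambda$, $\mu$ chosen appropriately and $M$ decomposed.

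For part (3), I would exhibit an explicit counterexample. The natural candidate is something like $\eta = s(0) + s(\text{(small dominant weight)})$ arranged so that $\eta\chi((p-1)\rho)$ is a good filtration character but $\eta\chi((p-1)\rho+p\varpi_i)$ is not for some $i$ — the point being that the "linking" across the $p$-scaled translate detects a failure invisible at $\gamma = 0$. Concretely, in rank one ($G = SL_2$) I would look for $\eta = a\,s(0) + \cdots$ with small integer coefficients where Brauer's formula makes the computation transparent: multiplying $\chi((p-1)\rho)$ by $\eta$ may land all terms in dominant chambers with correct signs, while multiplying by $\chi((p-1)\rho + p\cdot 1)$ forces a reflection that produces a negative Weyl-character coefficient. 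The main work here is a finite search/verification; I expect constructing the example to be routine once one tracks which orbit sums are "allowed" by linkage, and the honest obstacle is only presentational — choosing the smallest, most convincing instance. I would present one such $\eta$ explicitly with the two computations side by side.
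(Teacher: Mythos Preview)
Your approach to part (1) rests on a false identity. You claim $\chi((p-1)\rho+p\delta) = \chi((p-1)\rho+p\gamma)\,\chi(\gamma')^F$ when $\delta = \gamma + p\gamma'$, but applying Andersen--Haboush to each side and cancelling $\chi((p-1)\rho)$ reduces this to $\chi(\gamma+p\gamma') = \chi(\gamma)\chi(\gamma')$, which already fails for $SL_2$ with $\gamma=0$, $\gamma'=\varpi_1$. Your fallback --- that multiplication by $\chi(\gamma')^F$ preserves good filtration characters --- is equally false: applied to the good filtration character $\chi(0)=1$ it would force $\chi(\gamma')^F$ itself to be a good filtration character, whereas for $SL_2$ one has $\chi(\varpi_1)^F = \chi(p) - \chi(p-2)$. (The fourth bullet in Section~\ref{SS:SteinbergRecall} applies only to $T((p-1)\rho+\lambda)\otimes\nabla(\mu)^{(1)}$, not to an arbitrary good-filtration module tensored with a Frobenius twist.) The paper's argument for (1) is entirely different and purely combinatorial: given $\gamma \notin \mathbb{X}_m$, it constructs $\gamma' \in \mathbb{X}_m$ by capping the large coordinates of $\gamma$ at $m$, and then uses Brauer's formula together with Lemma~\ref{L:NeededReflections} to show that the straightening of $\eta\,\chi((p-1)\rho+p\gamma)$ into the Weyl basis involves only simple reflections $s_i$ with $\langle\gamma,\alpha_i^\vee\rangle < m$; these fix the ``excess'' part of $p\gamma$, so the Weyl-basis coefficients for $\gamma$ and for $\gamma'$ coincide up to a uniform shift of index. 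No multiplicative factorisation of $\chi((p-1)\rho+p\gamma)$ is used.

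For (2) your outline points in the right direction, but the step you correctly flag as subtle --- that $\St\otimes M$ genuinely has a good filtration once its character is a good filtration character --- is the entire content, and none of your suggested workarounds settles it; the paper defers to \cite{And3}. For (3) the paper's counterexample lives in type $A_3$ with $p=5$ (Section~\ref{SS:A3}); rank one is almost certainly too rigid to produce one, so your proposed $SL_2$ search would likely come up empty.
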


\begin{proof}
(1) The only thing to prove is that it suffices to check the property on this finite set.  To do so, we show that for any $\gamma \not\in  \mathbb{X}_{m}$, there exists an element $\gamma_{m} \in \mathbb{X}_{m}$ such that $\eta\chi((p-1)\rho+p\gamma)$ is a good filtration character if and only if $\eta\chi((p-1)\rho+p\gamma^{\prime})$ is a good filtration character.

Let $J \subset \Pi$ be the set of all simple roots $\alpha_i$ such that
$$\langle \gamma,\alpha_i^{\vee} \rangle < m.$$
Define
$$\gamma_1 = \sum_{\alpha_i \in J} \langle \gamma, \alpha_i^{\vee} \rangle \varpi_i.$$
That is, in terms of the basis of fundamental dominant weights, $\gamma_1$ agrees with $\gamma$ on those coefficients that are less than $m$, and is $0$ on the others.  Now define
$$\gamma_2 = \sum_{\alpha_i \in \Pi \backslash J} m\varpi_i,$$
and set
$$\gamma^{\prime}=\gamma_1+\gamma_2.$$
We have that
$$\gamma=\gamma_1 + (\gamma-\gamma_1),$$
and the coefficients of $\gamma-\gamma_1$ are greater than or equal to those of $\gamma_2$.  Suppose now that $\sigma$ is a weight in $\eta$.  Then by Lemma \ref{L:NeededReflections} there is some $w \in W_J$ such that
$$w \bigcdot ((p-1)\rho + p\gamma + \sigma)$$
is in $\mathbb{X}^+ -\rho$.  Note that  $w(\gamma-\gamma_1)=\gamma-\gamma_1$ for any such $w$, as this weight pairs to $0$ with all $\alpha_i^{\vee}$ for $\alpha_i \in J$.  Applying Lemma \ref{L:dotandregularaction}, we then have
\begin{align*}
w \bigcdot ((p-1)\rho + p\gamma + \sigma) & = w \bigcdot ((p-1)\rho + p\gamma_1 + \sigma) + w(p(\gamma-\gamma_1))\\
& = w \bigcdot ((p-1)\rho + p\gamma_1 + \sigma) + p(\gamma-\gamma_1).
\end{align*}
The reasoning above also implies for this same $w$ that the weight
\begin{align*}
w \bigcdot ((p-1)\rho + p\gamma^{\prime} + \sigma) & = w \bigcdot ((p-1)\rho + p\gamma_1 + \sigma) + w(p\gamma_2)\\
& = w \bigcdot ((p-1)\rho + p\gamma_1 + \sigma) + p\gamma_2
\end{align*}
is in $\mathbb{X}^+ - \rho$.  It therefore follows that in the basis of Weyl characters, $\eta\chi((p-1)\rho+p\gamma)$ equals
$$\sum_{\mu \in \mathbb{X}} c_{\mu}\chi(\mu+p(\gamma-\gamma_1)),$$
if and only if in this basis $\eta\chi((p-1)\rho+p\gamma^{\prime})$ equals
$$\sum_{\mu \in \mathbb{X}} c_{\mu}\chi(\mu+p\gamma_2).$$
(Note that $\mu$ need not be dominant in these expressions, but must be after adding $p(\gamma-\gamma_1)$ or $p\gamma_2$.)  This finishes the proof of (1).

(2) This is shown in Section 2 of \cite{And3}.  

(3) A counter-example is given in Section \ref{SS:A3}.
\end{proof}

We can now present our algorithm for determining $\mathcal{M}_p(\lambda)$.

\begin{algorithm}\label{Algorithm1}
\textbf{Input:} a weight $\lambda \in \mathbb{X}^+$.

\vspace{0.1in}
\noindent \textbf{Output:} the character $\mathcal{M}_p(\lambda)$.

\vspace{0.1in}
\noindent \textbf{Preliminary Steps:}
\begin{enumerate}
\item Set $\Psi^+(\mathcal{M}_p(\lambda))$ to be the set of all $\gamma \in \mathbb{X}^+$ such that $(\gamma-\rho) \uparrow (\lambda-\rho)$.
\vspace{0.05in}

\item Let $m$ be the minimal integer such that $\langle \lambda, \alpha_0^{\vee} \rangle < pm$.  Recall that $\mathbb{X}_m \subseteq \mathbb{X}^+$ denotes the set of $m$-restricted weights.
\vspace{0.05in}

\item Initialize $\mathcal{M}_p(\lambda)=s(\lambda)$.
\end{enumerate}

\vspace{0.1in}
\noindent \textbf{Iterative Steps:}
\begin{enumerate}
\item Let $\mu \in \Psi^+(\mathcal{M}_p(\lambda))$ be a maximal weight, under the $\le$ ordering, such that the multiplicity of $s(\mu)$ in $\mathcal{M}_p(\lambda)$ is $0$.  If no such $\mu$ exists, then the process is ended.  Otherwise, proceed to Step (2).

\vspace{0.05in}
\item For each $\gamma \in \mathbb{X}_m$, write the character
$$\chi((p-1)\rho+p\gamma)\mathcal{M}_p(\lambda)$$
in the basis of Weyl characters.

\vspace{0.05in}
\item Let $x_{\mu}$ be the least integer appearing on as a coefficient on a term of the form
$$\chi((p-1)\rho+p\gamma + w\mu)$$
in the previous step, for all $w \in W$ and for all $\gamma \in \mathbb{X}_m$.

\vspace{0.05in}
\item Add $-x_{\mu}s(\mu)$ to $\mathcal{M}_p(\lambda)$ and repeat Step (1).
\end{enumerate}

\end{algorithm}

\noindent \textbf{Claim:} This algorithm terminates after a finite number of computations and returns the same element regardless of any choices made at any stage.  The element it returns satisfies the good Steinberg multiplication property.

\begin{proof}
Regarding the termination of the algorithm, because the set $\mathbb{X}_m$ is finite, it is clear that the computations made at every stage are finite.  Also, after each loop the coefficient on the relevant $s(\mu)$ becomes positive, so the loop never returns to a previously settled case.  To see this, suppose that $\mu^{\prime}$ is minimal such that $(\mu-\rho) \uparrow (\mu^{\prime}-\rho)$.  If $\mu$ is (at this stage) a maximal weight such that the coefficient of $s(\mu)$ is $0$, then it must be true that $s(\mu^{\prime})>0$.  Now, applying the argument in the proof of Theorem \ref{T:MainGen}(2), it will follow that after this loop the coefficient of $s(\mu)$ is at least as big as the coefficient of $s(\mu^{\prime})$.

Next, we will show that at Step (1) in the iterative process, if there are two or more maximal weights satisfying the condition (necessarily incomparable to each other), the the order in which they are chosen does not matter.  Suppose then that $\mu_1$ and $\mu_2$ are two such weights, and that when adding in the orbit sums $s(\mu_1)$, we impact the number of $s(\mu_2)$ needed later.  This would mean that there are elements $w_1, w_2 \in W$ such that $(p-1)\rho+p\gamma+w_2\mu_2 \in \mathbb{X}^+-\rho$, and
$$\chi((p-1)\rho+p\gamma+w_1\mu_1)=\pm \chi((p-1)\rho+p\gamma+w_2\mu_2).$$
If
$$(p-1)\rho+p\gamma+w_1\mu_1=(p-1)\rho+p\gamma+w_2\mu_2,$$
then we would have $w_1\mu_1=w_2\mu_2$ for $w_1$ and $w_2$ elements in the finite Weyl group.  But this cannot happen since $\mu_1$ and $\mu_2$ are distinct dominant weights.  The only other case then is that for some nontrivial $w \in W$,
$$ w \bigcdot((p-1)\rho+p\gamma+w_1\mu_1)=(p-1)\rho+p\gamma+w_2\mu_2.$$
However, applying Lemma \ref{L:StraighteningInterior}, it would follow that $w_2\mu_2$ lies in the convex hull of $W(w\mu_1)$, so that $\mu_2$ lies in the convex hull of $W\mu_1$.  But this is well known to imply that $\mu_2 \le \mu_1$, which contradicts our assumption that these weights are not comparable.

Finally, by Theorem \ref{T:Thisisenough} we know that the final value for $\mathcal{M}_p(\lambda)$ has the good Steinberg multiplication property.
\end{proof}

\subsection{}

We wish to compare the multiplicities between $t_{\zeta}(\lambda)$ and $\mathcal{M}_p(\lambda)$.  As some explicit computations indicate, these characters will in general differ, so that $t_{\zeta}(\lambda)$ is defined by more than just having the good Steinberg multiplication property.  Nonethelss, in a very special case we do always have equality.

\begin{prop}\label{P:extremecase}
For all $\lambda \in \mathbb{X}^+$,
$$\mathcal{M}_p(p\lambda)=\chi(\lambda)^F.$$
In particular, $\mathcal{M}_p(p\lambda)=t_{\zeta}(p\lambda)$.
\end{prop}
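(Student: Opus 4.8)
The plan is to establish the two halves of the statement separately. For the first equality $\mathcal{M}_p(p\lambda)=\chi(\lambda)^F$, I would proceed in two directions: first show that $\chi(\lambda)^F$ has the good Steinberg multiplication property, then show it is minimal among $W$-invariant elements of highest weight $p\lambda$ with that property. For the first direction, note that $\chi(\lambda)^F \cdot \chi((p-1)\rho+p\gamma) = \chi(\lambda)^F \chi((p-1)\rho)\chi(\gamma)^F = \chi((p-1)\rho)\,(\chi(\lambda)\chi(\gamma))^F$, using the factorization $\chi((p-1)\rho+p\gamma)=\chi((p-1)\rho)\chi(\gamma)^F$ recorded in Section 3. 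Since $\chi(\lambda)\chi(\gamma)$ is a good filtration character (a product of Weyl characters expands with nonnegative coefficients), its Frobenius twist is too, and multiplying by the Steinberg character again yields a good filtration character — one can see this via the Andersen--Haboush theorem, $\nabla((p-1)\rho+p\nu)\cong \St\otimes\nabla(\nu)^{(1)}$, which shows $\chi((p-1)\rho)\chi(\nu)^F = \chi((p-1)\rho+p\nu)$ is itself a Weyl character for each dominant $\nu$. So $\chi(\lambda)^F\chi((p-1)\rho+p\gamma)$ is a nonnegative combination of Weyl characters, i.e.\ $\chi(\lambda)^F$ has good Steinberg multiplication.

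For minimality, suppose $\eta\in\mathbb{Z}[\mathbb{X}]^W$ has highest weight $p\lambda$ and satisfies good Steinberg multiplication; I want $\eta \geq \chi(\lambda)^F$ coefficientwise in the orbit-sum basis, or rather that the algorithm's minimal choice reproduces $\chi(\lambda)^F$. The cleanest route is to observe that by Theorem~\ref{T:MainGen}(2), any such $\eta$ has orbit multiplicities weakly increasing as one moves down the $\uparrow$-order from $p\lambda$; but more to the point, the algorithm computes $\mathcal{M}_p(p\lambda)$ by successively forcing the minimal correction so that all products $\chi((p-1)\rho+p\gamma)\mathcal{M}_p(p\lambda)$ have nonnegative Weyl coefficients. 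Since $\chi(\lambda)^F$ already has this property with highest-weight coefficient $1$, the algorithm can never be forced to add more than what $\chi(\lambda)^F$ contains: at each stage, if the current partial character is dominated coefficientwise by $\chi(\lambda)^F$, then the defect $x_\mu$ computed in Step (3) is at most the corresponding coefficient of $\chi(\lambda)^F$ (because replacing $\mathcal{M}_p(p\lambda)$ by $\chi(\lambda)^F$ only makes all Weyl coefficients larger or equal). An induction on the $\le$-order down from $p\lambda$ then shows $\mathcal{M}_p(p\lambda)\le \chi(\lambda)^F$; combined with the fact that $\chi(\lambda)^F$ has the property (so the algorithm's output, being minimal, is $\le\chi(\lambda)^F$) and that the output must itself satisfy the property, while any element strictly below $\chi(\lambda)^F$ fails it — here one uses the division/recursion characterization of $\chi(\lambda)$ from the Euler-characteristic axioms, transported through Frobenius — we conclude equality.

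The second assertion, $\mathcal{M}_p(p\lambda)=t_{\zeta}(p\lambda)$, is then immediate from the remark in Section~3 that $t_{\zeta}(p\lambda)=\chi(\lambda)^F$ (stated in the introduction and following from $T_{\zeta}((p-1)\rho+p\lambda)\cong \St_\zeta\otimes L_{\mathbb{C}}(\lambda)^F$ together with $\chi((p-1)\rho+p\lambda)=\chi((p-1)\rho)\chi(\lambda)^F$, so that $t_{\zeta}(p\lambda)=\chi(\lambda)^F$). The main obstacle I anticipate is the minimality argument: making rigorous the claim that ``being coefficientwise below $\chi(\lambda)^F$ forces the algorithm's defects to stay below those of $\chi(\lambda)^F$'' requires carefully tracking, through Brauer's formula and the reflection-straightening of Lemma~\ref{L:NeededReflections}, that no cancellation among the shifted Weyl characters $\chi((p-1)\rho+p\gamma+w\mu)$ can conspire to demand a larger correction than $\chi(\lambda)^F$ supplies. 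Handling this cleanly likely amounts to running the Euler-characteristic division algorithm for $\chi(\lambda)$ in parallel with Algorithm~\ref{Algorithm1} for $\mathcal{M}_p(p\lambda)$ and matching them step for step under Frobenius.
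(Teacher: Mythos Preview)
Your verification that $\chi(\lambda)^F$ has the good Steinberg multiplication property is correct and agrees with what the paper uses. Your reduction of the second assertion to $t_{\zeta}(p\lambda)=\chi(\lambda)^F$ (via Proposition~\ref{P:quantumtiltingtensor}) is also fine.

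The minimality half, however, is where you are working much harder than necessary, and the difficulty you flag at the end is real: the claim ``replacing the partial character by $\chi(\lambda)^F$ only makes all Weyl coefficients larger or equal'' is not obviously true, because adding a nonnegative combination of orbit sums and then expanding a product via Brauer's formula can produce cancellations after straightening. Tracking this through the algorithm step by step is exactly the kind of bookkeeping you want to avoid.

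The paper sidesteps all of this with one structural observation you have not used: the only orbit sums $s(\mu)$ that the algorithm ever touches are those with $(\mu-\rho)\uparrow(p\lambda-\rho)$, and since $p\lambda-\rho$ lies on every affine hyperplane through it (each $\langle p\lambda,\alpha^{\vee}\rangle$ is divisible by $p$), any such $\mu$ must lie in $p\mathbb{X}$. Consequently $\mathcal{M}_p(p\lambda)$ is a Frobenius twist, so one may write
\[
\mathcal{M}_p(p\lambda)=\chi(\lambda)^F+\sum_i c_i\,\chi(\mu_i)^F
\]
for distinct dominant $\mu_i<\lambda$ and integers $c_i$. Now multiply by $\chi((p-1)\rho)$: because $\chi((p-1)\rho)\chi(\nu)^F=\chi((p-1)\rho+p\nu)$, the result is a sum of \emph{distinct} Weyl characters with coefficients $1,c_1,c_2,\ldots$, and the good Steinberg multiplication hypothesis forces every $c_i\ge 0$. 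There is no cancellation to worry about precisely because the weights $(p-1)\rho+p\nu$ are already dominant and pairwise distinct. Thus $\mathcal{M}_p(p\lambda)\ge\chi(\lambda)^F$ in the orbit-sum basis; since $\chi(\lambda)^F$ itself has the property, minimality gives equality.

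So the missing idea is the support constraint $\mu\in p\mathbb{X}$. With it, the cancellation issue you anticipated simply disappears, and the inductive comparison with the algorithm becomes unnecessary.
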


\begin{proof}
The only orbit sums that are needed to appear in $\mathcal{M}_p(p\lambda)$ are those $s(\mu)$ such that
$$(\mu-\rho) \uparrow (p\lambda-\rho).$$
But such $\mu$ are necessarily in $p\mathbb{X}$.  It follows that there are distinct dominants weights $\mu_1, \mu_2, \ldots, \mu_m$ which are less than $\lambda$, and coefficients $c_i \in \mathbb{Z}$, such that
$$\mathcal{M}_p(p\lambda)=\chi(\lambda)^F + \sum c_i\chi(\mu_i)^F.$$
From the definition of $\mathcal{M}_p(p\lambda)$, it is necessary that
\begin{align*}
\chi((p-1)\rho)\mathcal{M}_p(p\lambda) & = \chi((p-1)\rho)\left(\chi(\lambda)^F + \sum c_i\chi(\mu_i)^F\right)\\
& = \chi((p-1)\rho+p\lambda) + \sum c_i\chi((p-1)\rho+p\mu_i)\\
\end{align*}
is a good filtration character.  The Weyl characters appearing in this last expression are all distinct, therefore the coefficients $c_i$ are nonnegative.  Since $\chi(\lambda)^F$ itself has the good Steinberg multiplication property, it follows by the minimality of $\mathcal{M}_p(\lambda)$ that all $c_i=0$, so that $\mathcal{M}_p(p\lambda)=\chi(\lambda)^F$.

The last part follows from Proposition \ref{P:quantumtiltingtensor}.
\end{proof}

%%%%%%%%%%%%%%%%%%%%%%%%%%%%%%%%%%%%%%%%%%%%%%%%%%%%%%%%%%%%%%%%%%%

\section{Computations in Type $A$}

\subsection{}

For $A_n$, the fundamental dominant weights are $\varpi_1, \varpi_2, \ldots \varpi_n$.  We write
$$(a_1,a_2,\ldots,a_n)=a_1\varpi_1+a_2\varpi_2+ \cdots + a_n\varpi_n.$$

\subsection{$A_3$}\label{SS:A3}

We will work here explicitly with $p=5$.  Lusztig's conjecture is known to hold for the algebraic group for all $p \ge 5$, and a detailed listing of the characters can be found in \cite[II.8.20]{rags}.  We also know, by \cite{BNPS2}, that $q(\lambda)=t(\lambda)$ for all $\lambda \in \mathbb{X}_p$ for all $p$.

Applying the general formula given in \cite[II.8.20]{rags}, we have that

$$L(3,2,3)=\chi(3,2,3) -\chi(2,2,2) - \chi(5,0,1) - \chi(1,0,5) + \chi(1,1,3) + \chi(3,1,1) - 2\chi(2,0,2)+3\chi(0,0,0).$$

Because Lusztig's theorem holds here in both cases, we are able by Theorem \ref{T:WeylSteinberg} to immediately read off the (equal) characters $q(4,4,4)$ and $t_{\zeta}(4,4,4)$.  By the comments above, these also equal $t(4,4,4)$.  We have

$$t(4,4,4)=s(4,4,4) + s(2,4,2) + s(7,1,1) + s(1,1,7) + s(3,3,1) + s(1,3,3) + 2s(2,2,2)+3s(1,2,1).$$

A computer calculation further reveals that these are the minimal orbit multiplicities needed to satisfy the good Steinberg multiplication property, so that this character is also equal to $\mathcal{M}_p(4,4,4)$.  At the same time, our algorithm found that
$$\chi((p-1)\rho)(t(4,4,4) - s(2,4,2))$$
is a good filtration character.  Thus the orbit $s(2,4,2)$ is not needed to obtain a good filtration character when multiplying by the Steinberg weight.  This gives an example proving claim (3) in \ref{T:Thisisenough}.

\subsection{$A_4$}\label{SS:A4}

Computer calculations by Scott \cite{Sc} have shown that Lusztig's conjecture holds for the algebraic group for $p=5, 7$.  Applying \cite{BNPS4}, we also have for $p=7$ that $q(\lambda)=t(\lambda)$.   We will therefore look at $p=7$, where it follows then that $t_{\zeta}(\lambda)=t(\lambda)$.  There are $52$ dominant weights appearing in $t(6,6,6,6)$ (this corresponds to the number of dominant alcoves that are less than the top restricted alcove under the $\uparrow$ ordering).  For brevity, we list only the orbit sums appearing with the greatest multiplicities, which are $9$, $13$, and $21$.
$$$$

\begin{tabular}{c|c|c}
Orbit Sum & Multiplicity in $t_{\zeta}(6,6,6,6)$ & Multiplicity in $\mathcal{M}_p(6,6,6,6)$\\
\hline
$s(4, 2, 2, 4)$  & $9$ & $9$\\
$s(1, 5, 5, 1)$ & $9$ & $9$\\
$s(3, 1, 2, 6)$ & $9$ & $9$\\
$s(6, 2, 1, 3)$ & $9$ & $9$\\
$s(5, 1, 2, 3)$ & $13$ & $13$\\
$s(3, 2, 1, 5)$ & $13$ & $13$\\
$s(4, 1, 1, 4)$ & $21$ & $20$\\
$s(1, 1, 1, 1)$ & $21$ & $20$\\
\end{tabular}

\vspace{0.2in}
From this computation we find the interesting fact that the $\mathcal{M}_p(\lambda)$ are not always equal to the $t_{\zeta}(\lambda)$.

\subsection{$A_5$}

Here we do not know of any small primes in which Lusztig's conjecture holds for the algebraic group, but we do know that it holds for $p > 6$ in the quantum setting.  Using the Kazhdan-Lusztig polynomials computed by Frank L\"{u}beck, all the computations that we were able to make showed complete agreement between $t_{\zeta}(\lambda)$ and $\mathcal{M}_p(\lambda)$.  The largest example for which we were able to compute $\mathcal{M}_p(\lambda)$ was for the weight $(3,6,2,4,4)$.

We found that the characters $t_{\zeta}(3,6,2,4,4)$ and $\mathcal{M}_p(3,6,2,4,4)$ were equal.  In these characters there are $79$ different orbit sums appearing.  The lowest orbit sum, $s(1,1,1,1,1)$, occurs with the greatest multiplicity, which is $23$.

\subsection{Larger ranks}

L\"{u}beck shared complete computations of the relevant Kazhdan-Lusztig polynomials for type $A$ up through rank $7$.  Unfortunately, our own code for computing the characters $\mathcal{M}_p(\lambda)$ has not been able to keep up!  Our first attempt was an ad hoc script written in MATLAB that replaced by-hand computations.  Subsequent versions have been modifications of this, but a more serious design is needed (we did not expect the $\mathcal{M}_p(\lambda)$ to be as close to the $t_{\zeta}(\lambda)$ as we have found them to be).  Ideally we would like to find more compact formulas for computing the characters $\mathcal{M}_p(\lambda)$, possibly one that is patterned after Freudenthal's formula for computing $\chi(\lambda)$.

We should also say at this point that the data grows dramatically we move up in rank, so it will be illuminating to study the characters side-by-side for larger $n$.  For example, in type $A_5$, $p=7$, the Steinberg quotient $t(6,6,6,6,6)$ contains $478$ distinct orbits, and the biggest orbit multiplicity is $646$.  Moving into larger ranks, and stating everything now in terms of a general prime $p>h$, we find for type $A_6$ that the Steinberg quotient $t((p-1)\rho)$ has $5706$ distinct orbits, with the greatest orbit sum multiplicity being $65199$.  For $A_7$, the corresponding character has $83824$ distinct orbits, and the largest multiplicity is more than $34$ million.

These numbers all come from the Kazhdan-Lusztig polynomial computations made by L\"{u}beck, though we had independently computed the numbers of distinct orbits in the characters just listed.

\section{Concluding Remarks}

We find the characters $\mathcal{M}_p(\lambda)$ to be quite interesting.  Although they are not in complete agreement with the $t_{\zeta}(\lambda)$, which when $p>h$ are computable by evaluating Kazhdan-Lusztig polynomials if $\lambda-\rho$ is $p$-regular, they are nonetheless close in the examples we have seen.  Finding further insights into this relationship will be the focus of future work.

It is worth observing also that the algorithm for computing $\mathcal{M}_p(\lambda)$ does not require $\lambda-\rho$ to be $p$-regular.  In fact, intuitively it seems reasonable that the more $p$-singular the weight $\lambda-\rho$ is, the closer $\mathcal{M}_p(\lambda)$ will be to $t_{\zeta}(\lambda)$ (in terms of relative difference).  See Proposition \ref{P:extremecase}.

In a similar vein, some calculations in the case of $A_4$ indicate that the behavior under translation is not the same for $\mathcal{M}_p(\lambda)$ as it is for $t_{\zeta}(\lambda)$, and this likely accounts for the descrepency detailed in Section \ref{SS:A4}.  This will also be explored further.

%%%%%%%%%%%%%%%%%%%%%%%%%%%%%%%%%%%%%%%%%%%%%%%%%%%%%%%%%%%%%%%%

\providecommand{\bysame}{\leavevmode\hbox
to3em{\hrulefill}\thinspace}


\begin{thebibliography}{888888888}

\bibitem[\sf AJS]{AJS} H. H. Andersen, J. C. Jantzen, W. Soergel, Representations of quantum groups at a pth root of unity and of semisimple groups in characteristic p: independence of p, {\em Ast\'{e}risque} {\sf 220} (1994).

\bibitem[\sf AMRW]{AMRW} P. Achar, S. Makisumi, S. Riche, G. Williamson, Koszul duality for Kac-Moody groups and characters of tilting modules, {\em J. Amer. Math. Soc.} {\sf 32} (2019), 261-310.

\bibitem[\sf And1]{And1} H.H. Andersen, Filtrations and tilting modules, {\em Ann. Sci. Ecole Norm. Sup.} {\sf 30} (1997) 353-366.

\bibitem[\sf And2]{And2} H. H. Andersen, A sum formula for tilting modules, {\em Journal of Pure and Applied Algegra} {\sf 152} (2000), no. 1-3, 17-40.

\bibitem[\sf And3]{And3} H.H. Andersen, $p$-Filtrations and the Steinberg module, {\em J. Algebra} {\sf 244} (2001), 664-683.

\bibitem[\sf BNPS1]{BNPS1} C. P. Bendel, D. K. Nakano, C. Pillen, P. Sobaje, Counterexamples to the tilting and (p,r)-filtration conjectures, {\em J. Reine Angew. Math.} {\sf 767} (2020), 193-202.

\bibitem[\sf BNPS2]{BNPS2} C. P. Bendel, D. K. Nakano, C. Pillen, P. Sobaje, On Donkin's tilting module conjecture I: Lowering the prime, {\em Represent. Theory} {\sf 26} (2022), 455-497. 

\bibitem[\sf BNPS3]{BNPS3} C. P. Bendel, D. K. Nakano, C. Pillen, P. Sobaje, On Donkin's tilting module conjecture II: Counterexamples, https://arxiv.org/abs/2107.11615

\bibitem[\sf BNPS4]{BNPS4} C. P. Bendel, D. K. Nakano, C. Pillen, P. Sobaje, On Donkin's tilting module conjecture III: New generic lower bounds, https://arxiv.org/abs/2209.04675

\bibitem[\sf BR]{BR} R. Bezrukavnikov, S. Riche, Hecke action on the principal block, {\em Compos. Math.} {\sf 158} (2022), no. 5, 953-1019.

%\bibitem[\sf Boe]{Boe} B. Boe, {\em The geometry of the Jantzen region in Lusztig's conjecture}


%\bibitem[\sf CPS]{CPS} E. Cline, B.J. Parshall, L. Scott, On the tensor product theorem for algebraic groups, {\em J. Algebra} {\sf 63} (1980), no. 1, 264-267.

%\bibitem[\sf Don]{D1} S. Donkin, A note on the characters of the projective modules for the infinitesimal subgroups of a semisimple group,  {\em Math. Scand.} {\sf 51} (1982), 142-150.

\bibitem[\sf Don1]{D2} S. Donkin, A note on the characters of the cohomology of induced vector bundles on $G/B$ in characteristic $p$, {\em J. Algebra} {\sf 258} (2002), 255-274.

\bibitem[\sf Don2]{D3} S. Donkin, On tilting modules for algebraic groups, {\em Math. Z.} {\sf 212} (1993), no. 1, 39-60. 

\bibitem[\sf Don3]{D4} S. Donkin, {\em Rational representations of algebraic groups}. Tensor products and filtration. Lecture Notes in Mathematics, {\sf 1140}. Springer-Verlag, Berlin, 1985.

\bibitem[\sf DS]{DS} S. Doty, J. Sullivan, Filtration patterns for representations of algebraic groups and their Frobenius kernels, {\em Math. Z.}  {\sf 195} (1987), no. 3, 391-407.

%\bibitem[\sf Fie]{Fie} P. Fiebig, An upper bound on the exceptional characteristics for Lusztig's character formula, {\em J. Reine Angew. Math.} {\sf 673} (2012), 1-31.

\bibitem[\sf Fie]{Fie} P. Fiebig, Lusztig's conjecture as a moment graph problem, {\em Bull. Lond. Math. Soc.} {\sf 42} (2010), no. 6, 957-972.

\bibitem[\sf Fie2]{Fie2} P. Fiebig, An upper bound on the exceptional characteristics for Lusztig's character formula, {\em J. Reine Angew. Math.} {\sf 673} (2012), 1-31.

%\bibitem[\sf GGN1]{GGN1} Garibaldi, Guralnick, Nakano Globally irreducible Weyl modules for algebraic groups

%\bibitem[\sf GGN2]{GGN2} Garibaldi, Guralnick, Nakano Globally irreducible Weyl modules for quantum groups

\bibitem[\sf HTT]{HTT} S. Donkin, Tilting modules for algebraic groups and finite dimensional algebras. {\em Handbook of tilting theory}, 215-257, London Math. Soc. Lecture Note Ser., 332, Cambridge Univ. Press, Cambridge, 2007.

\bibitem[\sf H]{H} J. Humphreys, Modular representations of classical Lie algebras and semisimple groups, {\em J. Algebra} {\sf 19} (1971), 51-79.

\bibitem[\sf Jan]{rags} J. C. Jantzen, {\em Representations of
Algebraic Groups}, Second Edition, Mathematical Surveys and Monographs, Vol. 107, American Mathematical Society, Providence RI, 2003. 

%\bibitem[\sf Jan2]{Jan2} J. C. Jantzen, Character formulae from Hermann Weyl to the present. \textit{Groups and analysis}, 232–270, London Math. Soc. Lecture Note Ser., 354, Cambridge Univ. Press, Cambridge, 2008. 

\bibitem[\sf K]{K} S. Kato, On the Kazhdan-Lusztig polynomials for affine Weyl groups, {\em Adv. in
Math.} {\sf 55} (1985), no. 2, 103-130.


%\bibitem[\sf JW]{JW} Jensen Williamson

%\bibitem[\sf K]{K} T. Kildetoft, Decomposition of tensor products involving a Steinberg module, {\em Algebr. Represent. Theory} {\sf 20} (2017), no. 4, 951-975.

%\bibitem[\sf KN]{KN} T. Kildetoft, D. K. Nakano, On good $(p,r)$ filtrations for rational $G$-modules,  {\em J. Algebra} {\sf 423}, (2015), 702-725. 

\bibitem[\sf L]{L} G. Lusztig, Some problems in the representation theory of finite Chevalley groups. In {\em The Santa Cruz Conference on Finite Groups (Univ. California, Santa Cruz, Calif., 1979)}, volume 37 of {\em Proc. Sympos. Pure Math.}, pages 313-317. Amer. Math. Soc., Providence, R.I., 1980.

\bibitem[\sf Lin]{Lin} Z. Lin, Highest weight modules for algebraic groups arising from quantum gorups, {\em J. Algebra} {\sf 208}
(1998), 276-303.

\bibitem[\sf Lu]{Lu} F. L\"{u}beck, Computation of Kazhdan-Lusztig polynomials and some applications to finite groups, {\em Trans. Amer. Math. Soc.} {\sf 373} (2020), no. 4, 2331-2347.

\bibitem[\sf PS]{PS} B. Parshall, L. Scott, {\em On $p$-filtrations of Weyl modules}, {\em J. Lond. Math. Soc.} (2) {\sf 91} (2015), no. 1,
127-158.

\bibitem[\sf Pr]{Pr} A. A. Premet, Weights of infinitesimally irreducible representations of Chevalley groups over a field of prime characteristics, {\em Math. USSR Sbornik} {\sf 61} (1988), 167-183.


%\bibitem[\sf P]{P} C. Pillen, Tensor products of modules with restricted highest weight, {\em Comm. Algebra} {\sf 21} (1993) 3647-3661.

\bibitem[\sf RW1]{RW1} S. Riche, G. Williamson, Tilting modules and the $p$-canonical basis,  {\em Ast{\'e}risque} 2018, no. 397, ix+184 pp.

\bibitem[\sf RW2]{RW2} S. Riche, G. Williamson, A simple character formula, {\em Ann. H. Lebesgue} {\sf 4} (2021), 503-535. 

\bibitem[\sf RW3]{RW3} S. Riche, G. Williamson, Smith-Treumann theory and the linkage principle,  {\em Publ. Math. Inst. Hautes \'{E}tudes Sci.} {\sf 136} (2022), 225-292.

\bibitem[\sf Sc]{Sc} L. L. Scott, Some new examples in 1-cohomology, Special issue celebrating the 80th birthday of Robert Steinberg {\em J. Algebra} {\em 260} (2003), no. 1, 416-425.

%\bibitem[\sf S2]{S2} P. Sobaje, On character formulas for tilting and simple modules, {\em Adv. Math.} {\sf 369} (2020), 107172, 8 pp.

\bibitem[\sf S1]{S1} P. Sobaje, The Steinberg quotient of a tilting character, {\em Math. Z.} {\sf 297} (2021), no. 3-4, 1733-1747.

\bibitem[\sf Soe]{Soe} W. Soergel, Kazhdan-Lusztig polynomials and a combinatoric[s] for tilting modules. {\em Represent. Theory} {\sf 1} (1997), 83-114.

\bibitem[\sf St]{St} J. Stembridge, The partial order on dominant weights,  {\em Adv. Math.} 136 (1998), no. 2, 340-364.

\bibitem[\sf Su]{Su} I. D. Suprunenko, The invariance of the set of weights of irreducible representations of algebraic groups and Lie algebras of type A with restricted highest weight under reduction modulo p, {\em Vestsi Akad. Navuk BSSR, Ser. Fiz.-Mat.} {sf 2} (1983) 18-22 (Russian).

\bibitem[\sf W]{W} G. Williamson, Schubert calculus and torsion explosion, {\em J. Amer. Math. Soc.} {\sf 30} (2017), 1023-1046.

\bibitem[\sf Ye]{Ye} J.-C. Ye,  Filtrations of principal indecomposable modules of Frobenius kernels of reductive groups, {\em Math. Z.} {\sf 189} (1985), 515-527.

\end{thebibliography}
\end{document}